\def\NZQ{\mathbb}               
\def\QQ{{\NZQ Q}}
\def\ZZ{{\NZQ Z}}
\def\RR{{\NZQ R}}
\def\frk{\mathfrak}               
\def\Phi{{\frk N}}
\def\ab{{\mathbf a}}
\def\bb{{\mathbf b}}
\def\eb{{\mathbf e}}
\def\tb{{\mathbf t}}
\def\ub{{\mathbf u}}
\def\xb{{\mathbf x}}
\def\yb{{\mathbf y}}
\def\zb{{\mathbf z}}
\def\opn#1#2{\def#1{\operatorname{#2}}} 
\opn\chara{char} 
\opn\length{\ell} 
\opn\pd{pd} 
\opn\rk{rk}
\opn\projdim{proj\,dim} 
\opn\injdim{inj\,dim} 
\opn\rank{rank}
\opn\depth{depth} 
\opn\grade{grade} 
\opn\height{height}
\opn\embdim{emb\,dim} 
\opn\codim{codim}
\opn\Tr{Tr} 
\opn\bigrank{big\,rank}
\opn\superheight{superheight}
\opn\lcm{lcm}
\opn\trdeg{tr\,deg}
\opn\reg{reg} 
\opn\lreg{lreg} 
\opn\ini{in} 
\opn\lpd{lpd}
\opn\size{size}
\opn\mult{mult}
\opn\dist{dist}
\opn\cone{cone}
\opn\lex{lex}
\opn\rev{rev}
\opn\div{div} \opn\Div{Div} \opn\cl{cl} \opn\Cl{Cl}
\opn\Spec{Spec} \opn\Supp{Supp} \opn\supp{supp} \opn\Sing{Sing}
\opn\Ass{Ass} \opn\Min{Min}
\opn\Ann{Ann} \opn\Rad{Rad} \opn\Soc{Soc}
\opn\Syz{Syz} \opn\Im{Im} \opn\Ker{Ker} \opn\Coker{Coker}
\opn\Am{Am} \opn\Hom{Hom} \opn\Tor{Tor} \opn\Ext{Ext}
\opn\End{End} \opn\Aut{Aut} \opn\id{id} \opn\ini{in}
\opn\nat{nat}
\opn\pff{pf}
\opn\Pf{Pf} \opn\GL{GL} \opn\SL{SL} \opn\mod{mod} \opn\ord{ord}
\opn\Gin{Gin}
\opn\Hilb{Hilb}\opn\adeg{adeg}\opn\std{std}\opn\ip{infpt}
\opn\Pol{Pol}
\opn\sat{sat}
\opn\Var{Var}
\opn\Gen{Gen}
\opn\aff{aff} \opn\con{conv} \opn\relint{relint} \opn\st{st}
\opn\lk{lk} \opn\cn{cn} \opn\core{core} \opn\vol{vol}
\opn\link{link} \opn\star{star}
\opn\gr{gr}
\def\Ac{{\mathcal A}}
\def\Hc{{\mathcal H}}
\def\Ic{{\mathcal I}}
\def\Jc{{\mathcal J}}
\def\Gc{{\mathcal G}}
\def\Fc{{\mathcal F}}
\def\Oc{{\mathcal O}}
\def\Pc{{\mathcal P}}
\def\Qc{{\mathcal Q}}
\def\Cc{{\mathcal C}}
\def\Vol{{\textnormal{Vol}}}
\def\vol{{\textnormal{vol}}}
\def\pot#1#2{#1[\kern-0.28ex[#2]\kern-0.28ex]}
\opn\dirlim{\underrightarrow{\lim}}
\opn\inivlim{\underleftarrow{\lim}}
\def\Implies{\ifmmode\Longrightarrow \else
	\unskip${}\Longrightarrow{}$\ignorespaces\fi}
\def\implies{\ifmmode\Rightarrow \else
	\unskip${}\Rightarrow{}$\ignorespaces\fi}
\def\iff{\ifmmode\Longleftrightarrow \else
	\unskip${}\Longleftrightarrow{}$\ignorespaces\fi}
\newtheorem{Theorem}{Theorem}[section]
\newtheorem{Lemma}[Theorem]{Lemma}
\newtheorem{Corollary}[Theorem]{Corollary}
\newtheorem{Proposition}[Theorem]{Proposition}
\theoremstyle{definition}
\newtheorem{Example}[Theorem]{Example}
\newtheorem{Examples}[Theorem]{Examples}
\newtheorem{Question}[Theorem]{Question}
\let\epsilon\varepsilon
\let\phi=\varphi
\let\kappa=\varkappa
\opn\dis{dis}
\opn\height{height}
\opn\dist{dist}
\def\pnt{{\raise0.5mm\hbox{\large\bf.}}}
\opn\Lex{Lex}
\opn\conv{conv}
\begin{document}
\title{Integer decomposition property for Cayley sums of\\
 order and stable set polytopes}
\author{Takayuki Hibi, Hidefumi Ohsugi and Akiyoshi Tsuchiya}

\address{Takayuki Hibi,
Department of Pure and Applied Mathematics,
Graduate School of Information Science and Technology,
Osaka University,
Suita, Osaka 565-0871, Japan}
\email{hibi@math.sci.osaka-u.ac.jp}

\address{Akiyoshi Tsuchiya,
Graduate school of Mathematical Sciences,
University of Tokyo,
Komaba, Meguro-ku, Tokyo 153-8914, Japan} 
\email{akiyoshi@ms.u-tokyo.ac.jp}

\address{Hidefumi Ohsugi,
Department of Mathematical Sciences,
School of Science and Technology,
Kwansei Gakuin University,
Sanda, Hyogo 669-1337, Japan} 
\email{ohsugi@kwansei.ac.jp}

\subjclass[2010]{13P10, 52B20}
\keywords{Reflexive polytope, Integer decomposition property, Order polytope,
Stable set polytope, Perfect graph, Ehrhart $\delta$-polynomial, Gr\"obner basis}

\begin{abstract}
Lattice polytopes which possess the integer decomposition property 
(IDP for short)
turn up in many fields of mathematics.
It is known that if the Cayley sum of lattice polytopes 
possesses  IDP, then so does their Minkowski sum.
In this paper,  the Cayley sum of the order polytope of a finite poset and the stable set polytope of a finite simple graph
is studied.
We show that the Cayley sum of an order polytope and the stable set polytope of a perfect graph 
possesses a regular unimodular triangulation and IDP,
and hence so does their Minkowski sum.
Moreover, it turns out that, for an order polytope and the stable set polytope of a graph, the following conditions are equivalent:
(i) the Cayley sum is Gorenstein; (ii) the Minkowski sum is Gorenstein; (iii) the graph is perfect.
\end{abstract}
\maketitle
\section*{Introduction}

A {\em lattice polytope} is a convex polytope all of whose vertices have integer coordinates.
For two lattice polytopes $\Pc, \Qc \subset \RR^d$, $\Pc + \Qc = \{\xb + \yb: \xb \in \Pc,\  \yb \in \Qc \}$ is called the {\em Minkowski sum} of $\Pc$ and $\Qc$.
In \cite{Oda}, Oda posed the following fundamental question about Minkowski sums:
\begin{Question}[{\cite[Problem 1]{Oda}}]
	Let $\Pc \subset \RR^d$ and $\Qc \subset \RR^d$ be lattice polytopes.
	When does the equation
	\begin{equation}
	\label{eq:oda}
	\Pc \cap \ZZ^d + \Qc \cap \ZZ^{d} = (\Pc + \Qc) \cap \ZZ^d
	\end{equation}
	hold?
\end{Question}

In general, for two lattice polytopes $\Pc, \Qc \subset \RR^d$, the equation (\ref{eq:oda}) does not necessarily hold, even in the special case $\Pc=\Qc$.
A lattice polytope $\Pc \subset \RR^d$ possesses the {\em integer decomposition property} (IDP for short) if for any positive integer $k \geq 1$, the equation
\[
	\Pc \cap \ZZ^d + k\Pc \cap \ZZ^{d} = (k+1) \Pc \cap \ZZ^d
\]
holds, where $k\Pc =\{k\xb : \xb \in \Pc \}$ is the $k$th dilated polytope of $\Pc$.
A lattice polytope which possesses the integer decomposition property is called {\em IDP}.
IDP polytopes turn up in many fields of mathematics such as algebraic geometry and  commutative algebra, and they are particularly important in the  theory and application of integer programing \cite[\S 22.10]{integer}.
We also consider another question about Minkowski sums:
\begin{Question}
	Let $\Pc \subset \RR^d$ and $\Qc \subset \RR^d$ be IDP polytopes.
	When is $\Pc+\Qc$ IDP? 
\end{Question}
In general, for two IDP lattice polytopes $\Pc, \Qc \subset \RR^d$, $\Pc + \Qc$ is not necessarily IDP (see \cite[\S 4]{LM}).
For these two questions, to consider the integer decomposition property of Cayley sums is useful.
Given two lattice polytopes $\Pc, \Qc \subset \RR^d$, the {\em Cayley sum} of $\Pc$ and $\Qc$, denoted by $\Pc * \Qc$,
 is the convex hull of 
$
(\Pc \times \{1\})
\cup
(\Qc \times \{0\})
 \subset \RR^{d+1}$.
Then the following result is known:
\begin{Proposition}[{\cite[Theorem 0.4]{TCayley}}]
\label{ATcayley}
	Let $\Pc, \Qc \subset \RR^d$ be lattice polytopes.
		If $\Pc * \Qc$ is IDP, then
$\Pc + \Qc$ is IDP
and		$\Pc \cap \ZZ^d + \Qc \cap \ZZ^{d} = (\Pc + \Qc) \cap \ZZ^d$.
\end{Proposition}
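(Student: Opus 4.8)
I would prove this by exploiting the layered structure of the dilations of the Cayley sum. First I would record the standard reformulation of IDP: a lattice polytope $\Rc$ is IDP if and only if for every integer $N\ge 1$ each lattice point of $N\Rc$ is a sum of $N$ lattice points of $\Rc$; this is immediate by induction on $N$ from the two-term definition, and I would use it throughout with $\Rc=\Pc*\Qc$.

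Next I would describe the lattice points of the dilations $N(\Pc*\Qc)$. Writing a point of $\RR^{d+1}$ as $(\yb,j)$ with $j$ the last coordinate, one checks straight from the definition of the convex hull that $(\yb,j)\in N(\Pc*\Qc)$ if and only if $0\le j\le N$ and $\yb\in j\Pc+(N-j)\Qc$. Hence every lattice point of $N(\Pc*\Qc)$ has integer last coordinate $j$ with $0\le j\le N$, and the lattice points at height $j$ are exactly the $(\yb,j)$ with $\yb\in(j\Pc+(N-j)\Qc)\cap\ZZ^d$. The two cases I need are $N=1$, where the lattice points of $\Pc*\Qc$ at height $1$ are $(\xb,1)$ with $\xb\in\Pc\cap\ZZ^d$ and those at height $0$ are $(\yb,0)$ with $\yb\in\Qc\cap\ZZ^d$, and $N=2m$, $j=m$, which identifies the height-$m$ slice of $2m(\Pc*\Qc)$ with $m(\Pc+\Qc)$; in particular $(\zb,m)\in 2m(\Pc*\Qc)\cap\ZZ^{d+1}$ whenever $\zb\in m(\Pc+\Qc)\cap\ZZ^d$.

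From here both conclusions fall out. For the equality $\Pc\cap\ZZ^d+\Qc\cap\ZZ^d=(\Pc+\Qc)\cap\ZZ^d$ only the inclusion $\supseteq$ needs an argument: given $\zb\in(\Pc+\Qc)\cap\ZZ^d$ we have $(\zb,1)\in 2(\Pc*\Qc)\cap\ZZ^{d+1}$, so IDP of $\Pc*\Qc$ gives $(\zb,1)=\ab+\bb$ with $\ab,\bb\in(\Pc*\Qc)\cap\ZZ^{d+1}$; their last coordinates lie in $\{0,1\}$ and sum to $1$, so by the $N=1$ description $\ab=(\xb,1)$ with $\xb\in\Pc\cap\ZZ^d$ and $\bb=(\yb,0)$ with $\yb\in\Qc\cap\ZZ^d$, whence $\zb=\xb+\yb$. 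For IDP of $\Pc+\Qc$, fix $k\ge 1$ and $\zb\in(k+1)(\Pc+\Qc)\cap\ZZ^d$; then $(\zb,k+1)\in 2(k+1)(\Pc*\Qc)\cap\ZZ^{d+1}$, so the reformulated IDP of $\Pc*\Qc$ writes $(\zb,k+1)=\sum_{i=1}^{2(k+1)}\cb_i$ with each $\cb_i\in(\Pc*\Qc)\cap\ZZ^{d+1}$. Each $\cb_i$ has last coordinate $0$ or $1$ and these sum to $k+1$, so after reindexing exactly $k+1$ of them are $(\xb_i,1)$ with $\xb_i\in\Pc\cap\ZZ^d$ and the remaining $k+1$ are $(\yb_i,0)$ with $\yb_i\in\Qc\cap\ZZ^d$. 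Then $\zb=\sum_{i=1}^{k+1}(\xb_i+\yb_i)$, and setting $\ub=\xb_1+\yb_1\in(\Pc+\Qc)\cap\ZZ^d$ and $\vb=\sum_{i=2}^{k+1}(\xb_i+\yb_i)$, which lies in $k(\Pc+\Qc)\cap\ZZ^d$ since it is a sum of $k$ lattice points of $\Pc+\Qc$, exhibits $\zb=\ub+\vb$ as the required decomposition; the reverse inclusion is trivial.

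The argument is short, and I do not expect a serious obstacle: the only places calling for care are the direct verification of the slice description of $N(\Pc*\Qc)$ and the bookkeeping that pins down exactly $k+1$ of the $\cb_i$ at height $1$.
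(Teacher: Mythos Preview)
Your argument is correct. The paper itself does not prove this proposition; it is stated with a citation to \cite[Theorem 0.4]{TCayley} and used as a black box. Your direct proof via the slice description of $N(\Pc*\Qc)$ is the natural one, and every step checks out: the identification of the height-$j$ slice of $N(\Pc*\Qc)$ with $j\Pc+(N-j)\Qc$ is straightforward from the definition of the Cayley sum, and once that is in hand both conclusions follow exactly as you describe by reading off last coordinates. There is nothing further to compare.
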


On the other hand, we will show that if the Cayley sum possesses a regular unimodular triangulation, then so does the Minkowski sum (Theorem~\ref{thm:groebner}).
Note that $\Pc$ and $\Qc$ are facets of $\Pc * \Qc$.
Hence 
if $\Pc * \Qc$ is IDP (resp.~has a regular unimodular triangulation), then each of $\Pc$ and $\Qc$ is IDP
 (resp.~has a regular unimodular triangulation).

In this paper, we discuss when the Cayley sum of two lattice polytopes is IDP.
In particular, we focus on  $\Oc_P*\Qc_G$, where $\Oc_P$ is the order polytope of a finite partially ordered set (poset for short) $P=\{p_1,\ldots,p_d \}$ and $\Qc_G$ is the stable set polytope of a finite simple graph on $[d]:=\{1,\ldots,d\}$.
Recently, several classes of lattice polytopes arising from order polytopes and stable set polytopes have been studied (\cite{double,HTperfect,HTorderstable,harmony}).
In fact, those lattice polytopes are Gorenstein, which is an important property of lattice polytopes in commutative algebra, toric geometry and mirror symmetry.
Moreover, in \cite{HTperfect,HTorderstable,harmony}, polyhedral characterizations of perfect graphs, which are important class in classical graph theory, are given.
%
In this paper, by using an algebraic technique on Gr\"obner bases, we will prove the following theorem:

\begin{Theorem}
	\label{thm:main}
	Let $P=\{p_1,\dots,p_d\}$ be a finite poset and $G$ a finite simple graph on $[d]$.  
Then
the codegree of $\Oc_P*\Qc_G$ is $2$,
the codegree of $\Oc_P + \Qc_G$ is $1$,
and the following conditions are equivalent{\rm :}
	\begin{itemize}
		\item[(i)] The Cayley sum $\Oc_P*\Qc_G$ is Gorenstein (of index $2$){\rm ;}
		\item[(ii)] The Minkowski sum $\Oc_P + \Qc_G$ is Gorenstein (of index $1$){\rm ;}
\item[(iii)] The graph $G$ is perfect.
	\end{itemize}
Moreover, if $G$ is perfect, then we have the following{\rm :}
\begin{itemize}
\item[(a)]
Each of $\Oc_P*\Qc_G$ and $\Oc_P+\Qc_G$
possesses a regular unimodular triangulation{\rm ;}
\item[(b)]
Each of $\Oc_P*\Qc_G$ and $\Oc_P+\Qc_G$ is IDP{\rm ;}
		\item[(c)]
		$(\Oc_P \cap \ZZ^d) + (\Qc_G \cap \ZZ^d)= (\Oc_P + \Qc_G) \cap \ZZ^d$.
\end{itemize}
\end{Theorem}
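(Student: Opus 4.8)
The plan is to prove every assertion about the Cayley sum $\Oc_P*\Qc_G$ by an explicit Gr\"obner basis computation and then to transfer the results to the Minkowski sum $\Oc_P+\Qc_G$ via Proposition~\ref{ATcayley} and Theorem~\ref{thm:groebner}. As a preliminary, recall that the lattice points of the order polytope $\Oc_P$ are exactly its vertices $\chi_I$, where $I$ runs over the order ideals of $P$ (the orientation convention for $\Oc_P$ is irrelevant here), and that the lattice points of the stable set polytope $\Qc_G$ are exactly its vertices $\chi_S$, where $S$ runs over the stable sets of $G$; consequently the lattice points of $\Oc_P*\Qc_G$ are the vectors $(\chi_I,1)$ and $(\chi_S,0)$. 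We work in the polynomial ring $K[\{x_I\},\{y_S\}]$ whose variables index these lattice points, and let $\Ic$ be the toric ideal defining $K[\Oc_P*\Qc_G]$.

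First, the codegrees. For lattice polytopes $\Pc,\Qc\subseteq\RR^d$ the lattice points of $k(\Pc*\Qc)$ at height $h$ are precisely the lattice points of the slice $h\Pc+(k-h)\Qc$, and such a point is interior to $k(\Pc*\Qc)$ exactly when $0<h<k$ and it is interior to that slice. Since $\Oc_P*\Qc_G$ has no lattice point of non-integer height, it has no interior lattice point, so $\operatorname{codeg}(\Oc_P*\Qc_G)\ge 2$. On the other hand, writing $\mathbf 1$ for the all-ones vector, we have $\mathbf 1=(1-t)\mathbf 1+t\mathbf 1$ with $(1-t)\mathbf 1\in\Oc_P$ (a convex combination of the vertices $\chi_\emptyset=\mathbf 0$ and $\chi_P=\mathbf 1$) and $t\mathbf 1\in\operatorname{int}\Qc_G$ for all small $t>0$, because the tangent cone of $\Qc_G$ at the vertex $\mathbf 0$ equals $\RR^d_{\ge 0}$ (every $\eb_i$ is a vertex of $\Qc_G$). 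Hence $\mathbf 1\in\operatorname{int}(\Oc_P+\Qc_G)$, so $(\mathbf 1,1)$ is an interior lattice point of $2(\Oc_P*\Qc_G)$; therefore $\operatorname{codeg}(\Oc_P*\Qc_G)=2$ and $\operatorname{codeg}(\Oc_P+\Qc_G)=1$.

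Next, suppose $G$ is perfect; we prove (a), (b), (c). The toric ideal $\Ic$ contains the Hibi relations $x_{I_1}x_{I_2}-x_{I_1\cap I_2}x_{I_1\cup I_2}$ among the $x$-variables, the known squarefree Gr\"obner basis of the toric ideal of $\Qc_G$ among the $y$-variables --- this is where perfectness enters, as $\Qc_G$ is then cut out by clique inequalities and is a compressed polytope --- and ``mixed'' binomials recording the distinct ways of writing $\chi_I+\chi_S=\chi_{I'}+\chi_{S'}$ (equivalently, keeping $I\cap S$ and $I\cup S$ fixed). Choosing a term order that refines the reverse-lexicographic orders computing those two squarefree Gr\"obner bases and makes every $x$-variable larger than every $y$-variable, one verifies via Buchberger's criterion that these binomials form a Gr\"obner basis of $\Ic$ with squarefree initial ideal. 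By Sturmfels' correspondence this produces a regular unimodular triangulation of $\Oc_P*\Qc_G$, i.e.\ (a) for the Cayley sum; a regular unimodular triangulation forces IDP, giving (b) for the Cayley sum, and then Proposition~\ref{ATcayley} yields (c) and IDP of $\Oc_P+\Qc_G$, while Theorem~\ref{thm:groebner} yields the regular unimodular triangulation of $\Oc_P+\Qc_G$. The step I expect to be the main obstacle is exactly this Buchberger verification: one must control the S-polynomials among the mixed binomials and between a mixed binomial and a Hibi or stable-set relation, reducing them by combinatorial straightening lemmas for pairs (order ideal, stable set) in which the clique-cover structure of induced subgraphs of the perfect graph $G$ is essential.

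Finally, the equivalence of (i), (ii), (iii). Since the codegrees are pinned down, (i) resp.\ (ii) asserts that $2(\Oc_P*\Qc_G)-(\mathbf 1,1)$ resp.\ $\Oc_P+\Qc_G-\mathbf 1$ is reflexive. The two base facets $\Oc_P\times\{1\}$ and $\Qc_G\times\{0\}$ of $\Oc_P*\Qc_G$ lie at lattice distance $1$ from $(\mathbf 1,1)$ automatically, while a side facet of $\Oc_P*\Qc_G$ has outer normal $(\ab,c)$ with $c=h_{\Qc_G}(\ab)-h_{\Oc_P}(\ab)$ and lies at lattice distance $1$ from $(\mathbf 1,1)$ if and only if $h_{\Oc_P}(\ab)+h_{\Qc_G}(\ab)-\langle\ab,\mathbf 1\rangle=1$, which is exactly the requirement that the facet of $\Oc_P+\Qc_G$ with normal $\ab$ be at lattice distance $1$ from $\mathbf 1$; hence (i)$\iff$(ii). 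If $G$ is perfect then $\Qc_G=\{\xb\ge 0:\sum_{i\in C}x_i\le 1\text{ for all cliques }C\}$, so the facet normals of $\Oc_P+\Qc_G$ are among $-\eb_i$, the covering-relation normals $\eb_j-\eb_i$, and the maximal-clique normals $\sum_{i\in C}\eb_i$ (that these exhaust the facet normals is a direct check, and can also be read off from the triangulation constructed above); since $h_{\Oc_P}(\ab)=\langle\ab,\mathbf 1\rangle$ whenever $\ab\ge 0$ (the whole poset is an order ideal), a short support-function computation shows each such facet is at lattice distance $1$ from $\mathbf 1$, so $\Oc_P+\Qc_G-\mathbf 1$ is reflexive, proving (iii)$\Rightarrow$(ii) and hence (iii)$\Rightarrow$(i). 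Conversely, if $G$ is imperfect then by Chv\'atal's theorem $\Qc_G$ has a facet $\langle\ab,\xb\rangle\le b$ with $\ab\in\ZZ^d_{\ge 0}$ primitive and $b\ge 2$ --- every facet of $\Qc_G$ with right-hand side $1$ being a maximal-clique inequality --- and, using $h_{\Oc_P}(\ab)=\langle\ab,\mathbf 1\rangle$ again, $\Oc_P+\Qc_G$ has a facet with normal $\ab$ at lattice distance $b\ge 2$ from $\mathbf 1$ (a dimension count, noting that the $(d-1)$-dimensional facet of $\Qc_G$ already lies in the relevant hyperplane, confirms this face is a facet). Hence $\Oc_P+\Qc_G-\mathbf 1$ is not reflexive, so (ii), and therefore (i), fails. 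This establishes all three equivalences.
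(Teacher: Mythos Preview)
Your overall architecture matches the paper's---use a squarefree Gr\"obner basis for the Cayley sum, transfer to the Minkowski sum via Proposition~\ref{ATcayley} and Theorem~\ref{thm:groebner}, and treat the Gorenstein equivalences separately---but two of the steps are genuine gaps rather than routine verifications.

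\textbf{The Gr\"obner basis.} You defer the Buchberger check to ``combinatorial straightening lemmas'' and flag it as the main obstacle; in the paper this is exactly Theorem~\ref{thm:initialOS}, and it is \emph{not} done by a direct S-pair computation. Instead the paper applies the unimodular change of coordinates of Lemma~\ref{changelemma} to identify $\Oc_P*\Qc_G$ with $\conv\{\Oc_{P'}\cup(-\Qc_G\times\{0\})\}$, embeds this into $\conv\{\Oc_{P'}\cup(-\Qc_{\widehat G})\}$ for a larger perfect graph $\widehat G$, invokes the Gr\"obner basis already computed in \cite{HTperfect} for that polytope, and then eliminates one variable. So the step you correctly identified as the hard one is handled by reduction rather than by straightening, and your sketch does not yet supply the missing argument.

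\textbf{The implication (iii) $\Rightarrow$ (ii).} Your claim that the facet normals of $\Oc_P+\Qc_G$ are \emph{among} $-\eb_i$, the cover normals $\eb_j-\eb_i$, and the maximal-clique normals is false. Take $P$ the antichain on $\{1,2,3\}$ and $G=K_3$: then $\eb_1$ (the facet $x_1\le 2$) and $(1,1,0)$ (the facet $x_1+x_2\le 3$) are both facet normals of $[0,1]^3+\Delta_3$, and neither is on your list---the common refinement of the two normal fans acquires new rays. Thus your ``short support-function computation'' does not cover all facets, and (iii) $\Rightarrow$ (ii) is unproved. The paper avoids this entirely: it proves (iii) $\Rightarrow$ (i) as Theorem~\ref{thm:delta}, by showing that the Gr\"obner bases of $\Oc_P*\Qc_G$ and of the known reflexive polytope $\Gamma(\Oc_{P^*},\Qc_G)=\conv\{\Oc_{P^*}\cup(-\Qc_G)\}$ have the same initial ideal (up to a free variable), whence the $\delta$-polynomials agree and Gorensteinness transfers.

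\textbf{Where your approach is cleaner than the paper's.} Your direct facet-correspondence proof of (i) $\Leftrightarrow$ (ii) is correct and more explicit than the paper's citation of \cite[Theorem~2.6]{BN08}. More notably, your argument for (not iii) $\Rightarrow$ (not ii) is nicer: using the Lov\'asz--Chv\'atal characterization of perfect graphs via $\Qc_G$, any non-clique facet $\langle\ab,\xb\rangle\le b$ of $\Qc_G$ with $\ab\ge 0$ primitive and $b\ge 2$ yields (since $F_{\Qc_G}(\ab)$ is already $(d-1)$-dimensional) a facet of $\Oc_P+\Qc_G$ at lattice distance $b\ge 2$ from $\mathbf 1$. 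The paper instead invokes the Strong Perfect Graph Theorem (Lemma~\ref{strong}) and produces explicit non-integral dual vertices from odd holes and odd antiholes, which is heavier machinery.
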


This paper is organized as follows:
Section \ref{sec:pre} will provide basic materials on order polytopes, chain polytopes, stable set polytopes, and  Gorenstein polytopes, and the definition of the Ehrhart $\delta$-polynomials of lattice polytopes.
In Section \ref{sec:groebner}, we will discuss the toric ideals of Minkowski sums and Cayley sums. We will show that if the toric ideal of a Cayley sum possesses a squarefree initial ideal, then that of the Minkowski sum also possesses a squarefree initial ideal (Theorem \ref{thm:groebner}).
In Section \ref{sec:initialOS}, we will investigate the toric ideal $I_{\Oc_P*\Qc_G}$ of $\Oc_P*\Qc_G$. 
In particular, we will show that $I_{\Oc_P*\Qc_G}$ possesses a squarefree initial ideal with respect to a reverse lexicographic order if $G$ is perfect (Theorem~\ref{thm:initialOS}).
In Section \ref{sec:gorensteinOS},
we will study the Ehrhart $\delta$-polynomial of $\Oc_P*\Qc_G$ (Theorem~\ref{thm:delta}).
In Section~\ref{sec:proofmain}, we will complete the proof of Theorem \ref{thm:main}.
Finally, in Section~\ref{sec:remark}, 
we will give some remarks about $\Oc_P*\Oc_Q$ and $\Qc_G*\Qc_H$, where $P, Q$ are finite partially ordered sets with $|P|=|Q|=d$ and $G,H$ are finite simple graphs on $[d]$.

\section{Preliminary}
\label{sec:pre}
In this section, 
we summarize basic materials on order polytopes, chain polytopes, stable set polytopes, and Gorenstein polytopes. 
First, we introduce two lattice polytopes arising from a finite poset.
Let $P=\{p_1,\ldots,p_d\} $ be a finite poset.
A {\em poset ideal} of $P$ is a subset $I \subset P$
such that if $p_i \in I$ and $p_j \leq p_i$ in $P$, then $p_j \in I$.  
In particular, the empty set $\emptyset$ as well as $P$ itself is a poset ideal
of $P$.  Let $\Ic(P)$ denote the set of poset ideals of $P$.
An {\em antichain} of $P$ is a subset $A \subset P$
such that for all $p_i$ and $p_j$ belonging to $A$ with $i \neq j$ are incomparable in $P$.  
In particular, the empty set $\emptyset$ and each $1$-element subset $\{p_j\}$ are antichains of $P$.
Let $\Ac(P)$ denote the set of antichains of $P$.
Given a subset $X \subset P$,
we may associate $\rho(X) = \sum_{p_j \in X} {\eb}_j \in {\RR}^d$,
where ${\bf e}_1, \ldots, \eb_{d}$ are
the standard coordinate unit vectors of ${\RR}^d$.
In particular, $\rho(\emptyset)$ is the origin ${\bf 0}$ of $\RR^d$.
Stanley \cite{Stanley}
introduced two classes of lattice polytopes arising from finite posets, 
which are called order polytopes and chain polytopes.
The \textit{order polytope} $\Oc_P$ of $P$
is defined to be the lattice polytope which is the convex hull of 
$\{\rho(I) : I \in \Jc(P) \}.$
The \textit{chain polytope} $\Cc_P$ of $P$ is defined to be the lattice polytope which is the convex hull of $\{\rho(A) : A \in \Ac(P) \}.$
It then follows that
$\dim \Oc_P= \dim \Cc_P=d$.
It is known that each of $\Oc_P$ and $\Cc_P$ possesses
a regular unimodular triangulation and IDP.

Second, we recall a lattice polytope arising from a finite simple graph.
Let $G$ be a finite simple graph on  $[d]$ and $E(G)$ the set of edges of $G$.
(A finite graph $G$ is called simple if $G$ has no loop and no multiple edge.)
A subset $S \subset [d]$ is called {\em stable}  
if, for all $i$ and $j$ belonging to $S$ with $i \neq j$,
one has $\{i,j\} \notin E(G)$.
A {\em clique} of $G$ is a subset $C \subset [d]$ such that
for all $i$ and $j$ belonging to $C$ with $i \neq j$,
one has $\{i,j\} \in E(G)$.
Let us note that a clique of $G$ is 
a stable set of the complementary graph $\overline{G}$ of $G$.
The {\em chromatic number} of $G$ is the smallest integer 
$t \geq 1$ for which
there exist stable sets $S_{1}, \ldots, S_{t}$ of $G$ with
$[d] = S_{1} \cup \cdots \cup S_{t}$.
A finite simple graph $G$ is said to be {\em perfect} 
(\cite{sptheorem}) if, 
for any induced subgraph $H$ of $G$
including $G$ itself,
the chromatic number of $H$ is
equal to the maximal cardinality of cliques of $H$.
Now, we introduce the stable set polytopes of finite simple graphs. Let $S(G)$ denote the set of stable sets of $G$.
Then one has $\emptyset \in S(G)$ and $\{ i \} \in S(G)$
for each $i \in [d]$. 
Given a subset $X \subset [d]$, we associate $\rho(X) = \sum_{j \in X} {\eb}_j \in {\RR}^d$.
The {\em stable set polytope} $\Qc_G$ of $G$ is defined to be the lattice polytope which is the convex hull of 
$\{\rho(S): S \in S(G) \}$.
Then one has $\dim \Qc_G=d$.
Moreover, it is known that every chain polytope is the stable set polytope of a perfect graph.
If $G$ is a perfect graph, then $\Qc_G$ possesses
a regular unimodular triangulation and IDP.
However, if $G$ is not perfect, then $\Qc_G$
is not necessarily IDP 
(see \cite{stablesetnormal}).

Next, we recall what Gorenstein polytopes are.
A lattice polytope $\Pc \subset \RR^d$ of dimension $d$ is 
called {\em reflexive} 
if the origin of $\RR^d$ belongs to the interior of $\Pc$ and 
if the dual polytope 
$
\Pc^{\vee} = \{ {\bf x} \in \RR^{d} \, : \, \langle {\bf x}, {\bf y} \rangle \le 1,
\, \forall {\bf y} \in \Pc \}
$
is again a lattice polytope.  Here $\langle {\bf x}, {\bf y} \rangle$
is the canonical inner product of $\RR^d$.
It is known that reflexive polytopes correspond to Gorenstein toric Fano varieties, and they are related to
mirror symmetry (see, e.g., \cite{mirror,Cox}).
The existence of a unique interior lattice point implies that in each dimension there exist only finitely many reflexive polytopes 
up to unimodular equivalence (\cite{Lag}), 
and all of them are known up to dimension $4$ (\cite{Kre}).
A lattice polytope $\Pc \subset \RR^d$ of dimension $d$ is called {\em Gorenstein} of index $r$ if $r\Pc$ is unimodularly equivalent to a reflexive polytope.
Gorenstein polytopes are of interest in combinatorial commutative algebra, mirror symmetry, and tropical geometry (we refer to \cite{BJ,BN08,Joswig}). 

We now introduce an important invariant of a lattice polytope.
Let, in general, $\Pc \subset \RR^d$ be a lattice polytope of dimension $d$.
The {\em Ehrhart $\delta$-polynomial} of $\Pc$ is the polynomial
\[
\delta(\Pc, \lambda) = 
(1 - \lambda)^{d+1} \left[ \,
1 + \sum_{n=1}^{\infty} \mid n\Pc \cap \ZZ^d \mid \, \lambda^n \, \right]
\] 
in $\lambda$.
Then the degree of $\delta(\Pc, \lambda)$ is at most $d$,
and each coefficient of $\delta(\Pc, \lambda)$ is a nonnegative integer (\cite{RS_OHGCM}).
In addition $\delta(\Pc, 1)$ coincides with the \textit{normalized volume} of $\Pc$, denoted by $\Vol(\Pc)$.
Moreover, if $\delta(\Pc,\lambda)=\delta_0+\delta_1 \lambda+\cdots+\delta_s \lambda^s$ with $\delta_s \neq 0$, then $d+1-s$ is called the \textit{codegree}
of $\Pc$.
The codegree $d+1-s$ is the smallest positive integer $\ell$ such that $\ell \Pc$ has an interior lattice point. 
Hence one can compute the codegree of $\Pc$ by finding a positive integer $\ell$ such that
$(\ell -1) \Pc$ has no interior lattice points and $\ell \Pc$ has an interior lattice point.
It is known that 
$\Pc$ is Gorenstein of index $r$ if and only if the codegree of $\Pc$ is $r$ and $\delta_i=\delta_{s-i}$ for $i=0,\ldots,s$. 
Refer the reader to \cite{BR15} and \cite[Part II]{HibiRedBook} for the detailed information 
about Ehrhart $\delta$-polynomials. 

Finally, we introduce important facts on the toric ideal $I_\Pc$ of a lattice polytope $\Pc$.
It is known that there exists a one-to-one correspondence between 
regular triangulations of $\Pc$ and the radical of the initial ideal of $I_\Pc$.
Moreover, the regular triangulation is unimodular if and only if the corresponding initial ideal 
is generated by squarefree monomials.
See, e.g., \cite{dojo, binomialideals, Stu} for basic facts on toric ideals together with Gr\"{o}bner bases,
and their application to lattice polytopes.

\section{Gr\"{o}bner bases of toric ideals of
$\Pc_1 * \cdots * \Pc_m$ and $\Pc_1 + \cdots + \Pc_m$}
\label{sec:groebner}

Let $K$ be a field and let $K[\tb^{\pm 1}, s]=K[t_1^{\pm 1}, \ldots, t_d^{\pm 1},s]$ be the 
 ring in $d +1$ variables over $K$.
If $\ab = (a_1,\ldots,a_d)\in \ZZ^d$,
then $\tb^\ab s$ is the Laurent monomial 
$t_1^{a_1}\cdots t_d^{a_d} s \in K[\tb^{\pm 1},s]$.
Let $\Pc \subset \RR^d$ be a lattice polytope with $\Pc \cap \ZZ^d= \{\ab_1,\ldots,\ab_n\}$.
Then, the {\em toric ring} of $\Pc$ is the subalgebra $K[\Pc]$ of $K[\tb^{\pm 1},s]$ generated by ${\tb^{\ab_1} s,\ldots, \tb^{\ab_n} s}$ over $K$. 
Let $K[\xb] = K[x_1,\ldots,x_n]$ denote the polynomial ring in $n$ variables over $K$. 
The {\em toric ideal} $I_\Pc$ of $\Pc$ is the kernel of the surjective homomorphism $\pi:K[\xb] \rightarrow K[\Pc]$ defined by $\pi(x_i)= \tb^{\ab_i} s$ for $1 \le i \le n$. It is known that $I_\Pc$ is generated by homogeneous binomials. 
One of the significance of Gr\"{o}bner bases of toric ideals is due to
the following fact.

\begin{Proposition}
\label{initialcomplex}
Let $\Pc \subset \RR^d$ be a lattice polytope such that 
$\sum_{\ab \in \Pc \cap \ZZ^d} \ZZ (\ab, 1) = \ZZ^{d+1}$.
If the toric ideal $I_{\Pc}$ possesses a squarefree initial ideal, 
then $\Pc$ has a regular unimodular triangulation and IDP.
\end{Proposition}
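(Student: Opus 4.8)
The plan is to derive both conclusions from the two facts on toric ideals recalled just above: the one-to-one correspondence between regular triangulations of $\Pc$ and radical initial ideals of $I_\Pc$, and the fact that such a triangulation is unimodular precisely when the corresponding initial ideal is squarefree. Fix a term order $<$ for which $\ini_<(I_\Pc)$ is squarefree. A squarefree monomial ideal equals its own radical, so $\ini_<(I_\Pc)$ is a radical initial ideal and hence, by the first fact, it is the Stanley--Reisner ideal of a regular triangulation $\Delta$ of $\Pc$ whose vertices lie in $\Pc\cap\ZZ^d=\{\ab_1,\dots,\ab_n\}$; by the second fact, $\Delta$ is unimodular. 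This already yields the first assertion, that $\Pc$ admits a regular unimodular triangulation.

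For IDP I would pass to the cone $C=\cone\{(\ab_1,1),\dots,(\ab_n,1)\}\subseteq\RR^{d+1}$ over $\Pc\times\{1\}$. For each integer $k\geq 1$ the lattice points of $C$ of height $k$ (last coordinate $k$) project bijectively onto $(k\Pc)\cap\ZZ^d$, while the points of $C$ that are sums of exactly $k$ of the generators $(\ab_i,1)$ project onto $(\Pc\cap\ZZ^d)+\dots+(\Pc\cap\ZZ^d)$ ($k$ summands). Hence IDP for $\Pc$ is equivalent to the assertion that every lattice point of $C$ is a nonnegative integer combination of $(\ab_1,1),\dots,(\ab_n,1)$: given $(\yb,k+1)\in C\cap\ZZ^{d+1}$ written as $\sum_i n_i(\ab_i,1)$ with $n_i\in\ZZ_{\geq 0}$ and $\sum_i n_i=k+1$, picking $i_0$ with $n_{i_0}\geq 1$ gives $\yb=\ab_{i_0}+(\yb-\ab_{i_0})$ with $\ab_{i_0}\in\Pc\cap\ZZ^d$ and $\yb-\ab_{i_0}\in(k\Pc)\cap\ZZ^d$, which is the nontrivial inclusion $(k+1)\Pc\cap\ZZ^d\subseteq(\Pc\cap\ZZ^d)+(k\Pc\cap\ZZ^d)$. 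To prove this semigroup equality I would use that $\Delta$ triangulates $\Pc$, so $C$ is the union of the subcones $\cone\{(\vb_0,1),\dots,(\vb_d,1)\}$ over the maximal faces $\sigma=\conv\{\vb_0,\dots,\vb_d\}$ of $\Delta$; unimodularity of $\sigma$ together with the hypothesis $\sum_{\ab\in\Pc\cap\ZZ^d}\ZZ(\ab,1)=\ZZ^{d+1}$ forces $(\vb_0,1),\dots,(\vb_d,1)$ to be a $\ZZ$-basis of $\ZZ^{d+1}$, whence every lattice point of $\cone\{(\vb_0,1),\dots,(\vb_d,1)\}$ is a nonnegative integer combination of these vectors, and taking the union over all maximal $\sigma$ finishes the argument.

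Granting the two cited facts, the argument is largely bookkeeping; the one step that genuinely uses the running hypothesis is the last one, where $\sum_{\ab\in\Pc\cap\ZZ^d}\ZZ(\ab,1)=\ZZ^{d+1}$ is exactly what ensures the unimodular simplices of $\Delta$ lift to bases of the full lattice $\ZZ^{d+1}$ (and not merely of a proper sublattice), so that the decompositions produced above actually live in $\ZZ^d$. Equivalently, one may phrase this as the normality of the toric ring $K[\Pc]$: the cone decomposition above shows that the semigroup generated by $(\ab_1,1),\dots,(\ab_n,1)$ is saturated in $\ZZ^{d+1}$, and IDP of $\Pc$ is a restatement of that saturation.
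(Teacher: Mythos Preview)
Your argument is correct and is precisely the standard one found in the references the paper cites (Sturmfels, \cite{Stu}; Herzog--Hibi--Ohsugi, \cite{binomialideals}). Note, however, that the paper does not actually give its own proof of this proposition: it is stated as a known fact immediately after the paragraph recalling the correspondence between initial ideals and regular triangulations, with a pointer to \cite{dojo,binomialideals,Stu}. So there is nothing to compare against beyond confirming that your write-up matches the textbook argument, which it does; in particular, you correctly identify that the lattice hypothesis $\sum_{\ab\in\Pc\cap\ZZ^d}\ZZ(\ab,1)=\ZZ^{d+1}$ is exactly what upgrades ``unimodular with respect to the lattice generated by the configuration'' to ``unimodular with respect to $\ZZ^{d+1}$'', which is the step needed for IDP.
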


We recall the definition of the Cayley sum of several lattice polytopes.
Given lattice polytopes $\Pc_1,\ldots,\Pc_m \subset \RR^d$ ($m \ge 2$),
the {\em Cayley sum} $\Pc_1*\cdots*\Pc_m \subset \RR^{d+m-1}$ of $\Pc_1,\ldots,\Pc_m$ is the convex hull of 
$$(\Pc_1 \times \{\eb_1\}) \cup \dots \cup (\Pc_{m-1} \times \{\eb_{m-1}\}) \cup (\Pc_m \times \{ {\bf 0}\})\subset \RR^{d+m-1}.
$$
Now, we consider Gr\"{o}bner bases of toric ideals of $\Pc_1*\cdots*\Pc_m$ and $\Pc_1+\cdots+\Pc_m$.

\begin{Theorem}
	\label{thm:groebner}
	Let $\Pc_1,\ldots,\Pc_m \subset \RR^d$ be lattice polytopes
of dimension $d$.
Suppose that each $\Pc_i$ satisfies
$\sum_{\ab \in \Pc_i \cap \ZZ^d} \ZZ (\ab, 1) = \ZZ^{d+1}$.
If the toric ideal $I_{\Pc_1 * \cdots * \Pc_m}$ possesses a squarefree initial ideal of degree at most $\ell$, then $I_{\Pc_1 + \cdots + \Pc_m}$ possesses a squarefree initial ideal of degree at most $\ell$,
and both $\Pc_1 * \cdots * \Pc_m$ and $\Pc_1 + \cdots + \Pc_m$ have a regular unimodular triangulation and IDP.
\end{Theorem}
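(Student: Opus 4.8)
The plan is to reduce the statement for the Minkowski sum to the statement for the Cayley sum via an explicit ring map, exploiting the fact that $\Pc_1+\cdots+\Pc_m$ appears naturally as a ``slice'' of $\Pc_1*\cdots*\Pc_m$. First I would set up coordinates: write $\Cc = \Pc_1*\cdots*\Pc_m \subset \RR^{d+m-1}$ and observe that the lattice points of $\Cc$ are exactly the points $(\ab,\eb_i)$ with $\ab \in \Pc_i \cap \ZZ^d$ (for $i=1,\dots,m-1$) and $(\ab,\mathbf 0)$ with $\ab \in \Pc_m \cap \ZZ^d$; this uses the hypothesis $\sum_{\ab \in \Pc_i \cap \ZZ^d}\ZZ(\ab,1)=\ZZ^{d+1}$, which guarantees $\Cc$ has no ``extra'' lattice points in the interior of the Cayley construction and that the slice at height $(1,1,\dots,1)/\,$(appropriate level) recovers $\Pc_1 \cap \ZZ^d + \cdots + \Pc_m \cap \ZZ^d$. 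The key algebraic point is that $K[\Pc_1+\cdots+\Pc_m]$ is a homogeneous subalgebra of (a Veronese-type piece of) $K[\Cc]$: multiplying one generator from each $K[\Pc_i]$-block corresponds to adding a point from each $\Pc_i$, and the last coordinates $\eb_1 + \cdots + \eb_{m-1} + \mathbf 0$ are then fixed, so this product lands in a well-defined graded component that is isomorphic (as a ring, after renaming the $s$-variable) to $K[\Pc_1+\cdots+\Pc_m]$.

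Next I would pass to Gr\"obner bases. Let $<$ be a term order on $K[\xb_{\Cc}]$ for which $\ini_<(I_\Cc)$ is squarefree and generated in degree $\le \ell$. I want to induce a term order on the polynomial ring $K[\yb]$ presenting $K[\Pc_1+\cdots+\Pc_m]$, where the variables $y_J$ are indexed by $m$-tuples $J=(\ab_1,\dots,\ab_m)$ with $\ab_i \in \Pc_i\cap\ZZ^d$ (equivalently, by lattice points of the Minkowski sum together with a choice of decomposition — but one must be careful, since a lattice point of the Minkowski sum may have several such decompositions). The clean way to handle this is to realize $I_{\Pc_1+\cdots+\Pc_m}$ as the image/contraction of $I_\Cc$ under the monomial map $y_J \mapsto x_{(\ab_1,\eb_1)}\cdots x_{(\ab_{m-1},\eb_{m-1})} x_{(\ab_m,\mathbf 0)}$, and then quote the standard fact (e.g.\ from \cite{Stu} or \cite{dojo}) that a squarefree initial ideal of the ``big'' toric ideal, under such a Segre/Veronese-type monomial substitution, descends to a squarefree initial ideal of the contracted toric ideal with degree bounded by the original degree bound. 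Concretely: order the $y_J$ by comparing their image monomials under $<$ (breaking ties arbitrarily but consistently), run Buchberger on the generators of $I_{\Pc_1+\cdots+\Pc_m}$ obtained by pulling back a squarefree Gr\"obner basis of $I_\Cc$, and check the S-polynomial reductions either lift to reductions upstairs or are handled by the tie-breaking; the degree bound $\ell$ is preserved because each $y_J$ has the same degree ($=1$ in the appropriate grading) and squarefree monomials pull back to squarefree monomials.

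Finally, once $I_{\Pc_1+\cdots+\Pc_m}$ is shown to have a squarefree initial ideal of degree $\le\ell$, the statements ``has a regular unimodular triangulation and IDP'' for $\Pc_1+\cdots+\Pc_m$ follow immediately from Proposition~\ref{initialcomplex}, once we verify the normality-of-the-lattice hypothesis $\sum_{\ab\in(\Pc_1+\cdots+\Pc_m)\cap\ZZ^d}\ZZ(\ab,1)=\ZZ^{d+1}$ — but this is automatic because each $\Pc_i$ already satisfies it and the Minkowski sum lattice contains each $\Pc_i$'s lattice (translated). The analogous statements for $\Pc_1*\cdots*\Pc_m$ itself follow directly from Proposition~\ref{initialcomplex} applied to $\Cc$, whose defining lattice condition must be checked: $\sum \ZZ(\text{lattice points of }\Cc)=\ZZ^{d+m-1+1}$, which again reduces to the hypotheses on the $\Pc_i$ plus the unimodularity of the standard simplex structure in the last $m$ coordinates. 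I expect the main obstacle to be the bookkeeping in the middle step — precisely, showing that the contraction of a squarefree initial ideal along the non-injective monomial map (non-injective because of multiple Minkowski decompositions) remains squarefree and degree-bounded; the cleanest route is probably to phrase everything in terms of the combinatorics of the initial complex (the regular unimodular triangulation of $\Cc$) and observe that restricting this triangulation to the Minkowski slice gives a regular unimodular triangulation of $\Pc_1+\cdots+\Pc_m$ with the right properties, which is really the geometric content of \cite[Theorem 0.4]{TCayley}'s IDP conclusion upgraded to the level of triangulations.
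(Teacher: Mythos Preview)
Your overall architecture matches the paper's: identify $K[\Pc_1+\cdots+\Pc_m]$ with the ``diagonal'' piece of $K[\Pc_1*\cdots*\Pc_m]$ obtained by taking one generator from each block, and then transfer the squarefree initial ideal across. You also correctly flag the two auxiliary checks (the lattice condition for $\Cc$ so that Proposition~\ref{initialcomplex} applies, and the fact that every lattice point of the Minkowski sum really is a sum of lattice points from the factors); the paper handles the second via \cite[Theorem~0.4]{TCayley} after first deducing IDP of the Cayley sum from Proposition~\ref{initialcomplex}.

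The genuine gap is the step you yourself single out as ``the main obstacle'': showing that a squarefree, degree-$\le\ell$ initial ideal of $I_\Cc$ descends along the monomial substitution $y_J \mapsto \prod_i x_{(\ab_i,\cdot)}$ to a squarefree, degree-$\le\ell$ initial ideal of $I_{\Pc_1+\cdots+\Pc_m}$. This is \emph{not} a standard fact in \cite{Stu} or \cite{dojo} at the level of generality you need, and the sketch you give (order the $y_J$ by their images, lift S-pair reductions, handle ties ad hoc) does not go through without substantial work---in particular, the non-injectivity you note means there are extra binomials among the $y_J$ that have no counterpart upstairs, and nothing in your argument controls their initial terms. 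The alternative you suggest, restricting the regular unimodular triangulation of $\Cc$ to the height-$(1,\dots,1)$ slice, is the Cayley-trick picture, but regularity and unimodularity of the induced mixed subdivision of $\Pc_1+\cdots+\Pc_m$ (as a triangulation, with the required degree bound on the initial ideal) are again not immediate.

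The paper closes this gap by recognising the situation as a \emph{generalized nested configuration} in the sense of Shibuta \cite{Shibuta}: with $\mathcal{B}_i$ the lifted lattice points of $\Pc_i$ and $\mathcal{A}=\{(1,\dots,1)\}$, one has $I_{\mathcal{B}} = I_{\Pc_1*\cdots*\Pc_m}$ and $I_{\mathcal{A}[\mathcal{B}_1,\dots,\mathcal{B}_m]} = I_{\Pc_1+\cdots+\Pc_m}$ (the latter identification using the lattice-point additivity from \cite{TCayley}). Then \cite[Theorem~3.5]{Shibuta} is exactly the black box that says a squarefree degree-$\le\ell$ initial ideal for $I_{\mathcal{B}}$ yields one for $I_{\mathcal{A}[\mathcal{B}_1,\dots,\mathcal{B}_m]}$. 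That theorem is the missing ingredient in your proposal.
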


\begin{proof}
Suppose that $I_{\Pc_1 * \cdots * \Pc_m}$ possesses a squarefree initial ideal of degree at most $\ell$.
Since each $\Pc_i$ satisfies
$\sum_{\ab \in \Pc_i \cap \ZZ^d} \ZZ (\ab, 1) = \ZZ^{d+1}$, we have
$$\sum_{\ab \in (\Pc_1* \cdots *\Pc_m)\cap \ZZ^{d+m-1}} \ZZ (\ab, 1) = \ZZ^{d+m}.$$
By Proposition \ref{initialcomplex}, the existence of a squarefree initial ideal guarantees that 
$\Pc_1* \cdots *\Pc_m$ is IDP.
By {\cite[Theorem 0.4]{TCayley}}, $\Pc_1 + \cdots + \Pc_m$ is IDP and
	$(\Pc_1 \cap \ZZ^d) +\cdots+ (\Pc_m \cap \ZZ^d)= (\Pc_1 + \cdots + \Pc_m) \cap \ZZ^d$.
We now use a result \cite[Theorem 3.5]{Shibuta}
on generalized nested configurations.
Let $K[z_1^{\pm 1}, \ldots, z_{d+m}^{\pm 1}]$
be a Laurent polynomial ring with 
$\deg (z_i) = {\bf 0} \in \QQ^m$  for  $i = 1,2,\ldots, d$
and $\deg (z_{d+j}) = \eb_j \in \QQ^m$
 for  $j = 1,2,\ldots, m$.
Let ${\mathcal B} = \bigcup_{i=1}^m {\mathcal B}_i$,
where 
${\mathcal B}_i =
\{(\ab, \ub_i) \in \ZZ^{d+m}: \ab \in \Pc_i \cap \ZZ^d\}
 $
for $i = 1,2,\ldots, m$,
and 
$\ub _1 = \eb_1 + \eb_m, \ldots, \ub _{m-1} = \eb_{m-1} + \eb_m,$
$\ub _m = \eb_m \in \RR^m$ .
Then 
$\ub _1, \ldots, \ub _m $ are linearly independent over $\QQ$ and ${\mathcal B}_i \subset 
\{\bb \in \ZZ^{d+m}: \deg (\zb^\bb) = \ub_i\}$.
Moreover, we have $ {\mathcal B}=
\{(\ab, 1):\ab \in (\Pc_1* \cdots *\Pc_m) \cap \ZZ^{d+m-1}\}$.   
Hence the toric ideal of ${\mathcal B}$ is equal to $I_{\Pc_1* \cdots *\Pc_m}$.

Let ${\mathcal A} = \{(1,\ldots,1)\} \subset \ZZ^m$.
Then the toric ideal of ${\mathcal A}$ is zero.
For ${\mathcal B}_1,\ldots,{\mathcal B}_m$ and ${\mathcal A}$,
the generalized nested configuration
${\mathcal A}[{\mathcal B}_1,\ldots,{\mathcal B}_m]$ defined in \cite[Section 3.3]{Shibuta} is 
\begin{eqnarray*}
{\mathcal A}[{\mathcal B}_1,\ldots,{\mathcal B}_m]
&=&
\{\bb_1 + \cdots + \bb_m  :  \bb_i \in {\mathcal B}_i\}\\
&=&
\{(\ab_1 +\cdots + \ab_m,  1,\ldots, 1,m) :  \ab_i \in \Pc_i \cap \ZZ^d\}\\
&=&
\{(\ab,   1,\ldots, 1,m) :  \ab \in (\Pc_1+\cdots+\Pc_m) \cap \ZZ^d \}.
\end{eqnarray*}
Hence the toric ideal of ${\mathcal A}[{\mathcal B}_1,\ldots,{\mathcal B}_m]$ is equal to 
$I_{\Pc_1+\cdots+\Pc_m}$.

Thus by \cite[Theorem 3.5]{Shibuta},
$I_{\Pc_1+\cdots+\Pc_m}$ has a squarefree initial ideal of degree $\le \ell$.
\end{proof}

\section{A Gr\"{o}bner base of $I_{\Oc_P*\Qc_G}$}
\label{sec:initialOS}

In this section,
we will show that the toric ideal $I_{\Oc_P * \Qc_G}$ possesses a squarefree initial ideal with respect to a reverse lexicographic order if $G$ is perfect.

The {\it dual} poset $(P^*,\leq_{P^*})$ of a poset $(P,\leq_{P})$ is the poset on the set $P^*= P$ such that $s \leq_{P^*} t$ 
if and only if $t \leq_P s$.
 For finite posets $(P,\leq_{P})$ and $(Q,\leq_{Q})$ with $P \cap Q =\emptyset$,
the \textit{ordinal sum} of $P$ and $Q$ is the poset $(P \oplus Q,\leq_{P \oplus Q})$ 
on $P \oplus Q=P \cup Q$ such that $s \leq_{P \oplus Q} t$ 
if (a) $s,t \in P$ and $s \leq_{P} t$,
or (b) $s,t \in Q$ and $s \leq_{Q} t$,
or (c) $s \in P$ and $t \in Q$.
Two lattice polytopes $\Pc \subset \RR^d$ and $\Qc \subset \RR^d$ are said to be {\em unimodularly equivalent} if there is an affine map $f : \RR^d \rightarrow \RR^d$ with
$f(\ZZ^d) = \ZZ^d$ and $f(\Pc) = \Qc$. 

\begin{Lemma}
\label{changelemma}
Let $P=\{p_1,\ldots,p_d\}$ be a finite poset
and let $\Qc \subset \RR^d$ be a lattice polytope containing the origin.
Then $\Oc_P * \Qc$ is unimodularly equivalent to 
$\conv \{ \Oc_{P'} \cup (- \Qc \times \{0\})  \}$,
where $P'= \{p_{d+1}\} \oplus P^*$.
\end{Lemma}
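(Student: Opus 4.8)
The plan is to exhibit one explicit unimodular transformation of $\RR^{d+1}$ and then check that it carries $\Oc_P * \Qc$ onto $\conv\{\Oc_{P'} \cup (-\Qc \times \{0\})\}$ by tracking vertices. Write coordinates on $\RR^{d+1}$ as $(x_1,\dots,x_d,x_{d+1})$, the last one being the Cayley parameter, and define $f \colon \RR^{d+1} \to \RR^{d+1}$ by
\[
f(x_1,\dots,x_d,x_{d+1}) = (x_{d+1}-x_1,\ \dots,\ x_{d+1}-x_d,\ x_{d+1}).
\]
This is linear with matrix $\left(\begin{smallmatrix}-I_d & \mathbf 1\\ \mathbf 0 & 1\end{smallmatrix}\right)$ of determinant $(-1)^d$ (indeed one checks $f\circ f = \id$), so $f(\ZZ^{d+1})=\ZZ^{d+1}$ and $f$ is a unimodular equivalence of $\RR^{d+1}$; it then suffices to prove $f(\Oc_P * \Qc) = \conv\{\Oc_{P'} \cup (-\Qc \times \{0\})\}$.

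Since $f$ is affine it commutes with convex hulls, so I only have to follow $f$ on the two obvious generating families of $\Oc_P * \Qc$, namely $\{(\rho(I),1):I\in\Ic(P)\}$ and $\Qc\times\{0\}$. On the second, $f(\vb,0)=(-\vb,0)$, so $f$ maps $\Qc\times\{0\}$ onto $-\Qc\times\{0\}$. On the first, observe that $1-\rho(I)_i$ equals $1$ when $p_i\notin I$ and $0$ when $p_i\in I$, whence $f(\rho(I),1)=(\rho(P\setminus I),1)$. Now as $I$ runs over the poset ideals of $P$, its complement $P\setminus I$ runs over all order filters of $P$, that is, over all poset ideals of the dual poset $P^*$; and, identifying the $(d+1)$st coordinate with $p_{d+1}$, the vector $(\rho(P\setminus I),1)$ is the indicator vector $\rho(\{p_{d+1}\}\cup(P\setminus I))$ in $\RR^{d+1}$. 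Since $p_{d+1}$ is the minimum of $P'=\{p_{d+1}\}\oplus P^*$, the sets $\{p_{d+1}\}\cup J$ with $J\in\Ic(P^*)$ are exactly the nonempty poset ideals of $P'$. Hence $f$ sends $\{(\rho(I),1):I\in\Ic(P)\}$ bijectively onto $\{\rho(K):\emptyset\neq K\in\Ic(P')\}$.

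Combining the two computations, $f(\Oc_P * \Qc)=\conv\bigl(\{\rho(K):\emptyset\neq K\in\Ic(P')\}\cup(-\Qc\times\{0\})\bigr)$. Here I will use the hypothesis that $\Qc$ contains the origin: then $\mathbf 0=\rho(\emptyset)\in -\Qc\times\{0\}$, so adjoining the vertex of $\Oc_{P'}$ coming from the empty ideal of $P'$ does not enlarge the hull, and the right-hand side equals $\conv\bigl(\{\rho(K):K\in\Ic(P')\}\cup(-\Qc\times\{0\})\bigr)=\conv\{\Oc_{P'}\cup(-\Qc\times\{0\})\}$, as wanted.

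I expect no serious obstacle here: once $f$ is written down the argument is essentially bookkeeping. The only delicate points are (i) that complementation genuinely bijects poset ideals of $P$ with poset ideals of $P^*$, and that $K\mapsto K\setminus\{p_{d+1}\}$ bijects the nonempty ideals of $P'$ with the ideals of $P^*$; and (ii) that the empty ideal of $P'$ is harmless precisely because of the standing hypothesis $\mathbf 0\in\Qc$ — without it the two polytopes would differ, since the left one would miss $\mathbf 0$ while the right one always contains it.
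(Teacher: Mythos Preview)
Your proof is correct and follows essentially the same approach as the paper: you use the very same linear map (the paper writes it as the block matrix $U=\left(\begin{smallmatrix}-E_d & \mathbf{1}\\ \mathbf{0} & 1\end{smallmatrix}\right)$), and then identify $f(\Oc_P\times\{1\})$ with $\Oc_{P^*}\times\{1\}$ and $f(\Qc\times\{0\})$ with $-\Qc\times\{0\}$, finally absorbing the missing $\rho(\emptyset)=\mathbf{0}$ into $-\Qc\times\{0\}$ via the hypothesis $\mathbf{0}\in\Qc$. Your version is in fact more explicit than the paper's, spelling out the bijection between ideals of $P$, ideals of $P^*$, and nonempty ideals of $P'$ where the paper leaves this implicit.
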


\begin{proof}
Let $f : \RR^{d+1} \rightarrow \RR^{d+1}$ be an affine map
defined by $f (\xb) = U \xb$, where $U$ is a $(d+1) \times (d+1)$
integer matrix
$$
U=
\left(
\begin{array}{ccc|c}
& &  & 1\\
    & -E_d & & \vdots\\
 & & & 1\\
\hline
 0 & \cdots & 0 & 1
\end{array}
\right).
$$
Here, $E_d$ is an identity matrix.
It then follows that $f(\ZZ^{d+1}) = \ZZ^{d+1}$,
$f(\Oc_P \times \{1\}) = \Oc_{P^*} \times \{1\}$ and
$f(\Qc \times \{0\}) = (-\Qc) \times \{0\}$. 
Since $\{{\bf 0}\} \cup ((\Oc_{P^*} \times \{1\}) \cap \ZZ^{d+1})= \Oc_{P'} \cap \ZZ^{d+1}$
and ${\bf 0} \in (-\Qc) \times \{0\}$, we have
$f(\Oc_P * \Qc) = \conv \{ \Oc_{P'} \cup (- \Qc \times \{0\})  \}$,
as desired.
\end{proof}

Let $P=\{p_1,\ldots,p_d\}$ be a finite poset
and $G$ a perfect graph on $[d]$.
Let
$$
{\mathcal R}_{P,G} = K[\{x_I\}_{I \in {\mathcal J}(P)} \cup \{y_S\}_{\emptyset \neq S \in S (G)}\cup \{ z  \}]
$$
denote the polynomial ring over $K$ and
define the surjective ring homomorphism
$\pi: {\mathcal R}_{P,G} \rightarrow K[\Oc_P * \Qc_G]
\subset K[t_1, \ldots, t_{d+1}, s]$
by the following:
\begin{itemize}
\item
$\pi( x_I ) = \tb^{\rho(I)} t_{d+1} s$,
where $I \in {\mathcal J}(P)$;
\item
$\pi( y_S ) = \tb^{\rho(S)}  s$,
where $\emptyset \neq S \in S(G)$;
\item
$\pi(z) = s$.
\end{itemize}
Then the toric ideal $I_{\Oc_P * \Qc_G}$ is the kernel of $\pi$.
Let $<_{P,G}$ denote a reverse lexicographic order on
${\mathcal R}_{P,G}$ induced by an ordering of variables such that 
\begin{itemize}
\item
$z <_{P,G} y_S <_{P,G} x_I$ for any $S \in S(G) \setminus \{\emptyset\}$ and $I \in \Jc (P)$;
\item
$x_{I'} <_{P,G} x_{I}$ for any $I, I' \in \Jc (P)$ such that $I' \supsetneq I$;
\item
$y_{S'} <_{P,G} y_{S}$ for any $S, S' \in S(G) \setminus \{\emptyset\}$ such that $S' \subsetneq S$.
\end{itemize}
It is known \cite{ASL} that 
$$
\Gc_P = \{ x_I x_{I'} - x_{I \cap I'} x_{I \cup I'} : I, I' \in \Jc (P) ,
I \not\subset I', I \not\supset I'\}
$$
is the reduced Gr\"obner basis of $I_{\Oc_P}$ with respect to 
a reverse lexicographic order induced by $<_{P,G}$.
Here we regard $x_\emptyset$ as $z$.
The initial ideal of $I_{\Oc_P}$ is generated by squarefree quadratic monomials
$$
\{ x_I x_{I'} : I, I' \in \Jc (P) ,
I \not\subset I', I \not\supset I'\}.
$$
Let $\Gc_G$ be the reduced Gr\"obner basis of $I_{\Qc_G}$ with respect to 
a reverse lexicographic order induced by $<_{P,G}$.
It is known \cite[Example~1.3 (c)]{OHcompressed} that the initial ideal of $I_{\Qc_G}$ with respect to 
{\em any} reverse lexicographic order is squarefree if $G$ is perfect.
Let 
$$
\Gc = \{ x_I y_S - x_{I \cup \{s\}} y_{S \setminus \{s\}}: 
s \in S \setminus I,
I \cup \{s\} \in \Jc (P)
 \}
.$$

\begin{Theorem}
\label{thm:initialOS}
Let $P=\{p_1,\ldots,p_d\}$ be a finite poset
and $G$ a perfect graph on $[d]$.
Then $\Gc_P \cup \Gc_G \cup \Gc$ defined above 
is a Gr\"obner basis of $I_{\Oc_P * \Qc_G}$ with respect to a reverse lexicographic order $<_{P, G}$.
Moreover, the initial ideal of $I_{\Oc_P * \Qc_G}$ is squarefree
with respect to $<_{P, G}$.
\end{Theorem}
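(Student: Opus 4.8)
The plan is to prove that $\Gc_P\cup\Gc_G\cup\Gc$ is a Gr\"obner basis of $I:=I_{\Oc_P*\Qc_G}$ with respect to $<_{P,G}$ by the ``standard monomials inject under $\pi$'' criterion, after which squarefreeness of the initial ideal is immediate. Write $<\,=\,<_{P,G}$. First I would check that every binomial in $\Gc_P\cup\Gc_G\cup\Gc$ lies in $I$: for $\Gc_P$ and $\Gc$ this comes from $\rho(I)+\rho(I')=\rho(I\cap I')+\rho(I\cup I')$ and $\rho(I)+\rho(S)=\rho(I\cup\{s\})+\rho(S\setminus\{s\})$ together with matching the exponents of $t_{d+1}$ and $s$ (using that $I\cup\{s\}\in\Jc(P)$ is assumed, that $S\setminus\{s\}$ is again stable, and that $y_\emptyset$ is read as $z$), and for $\Gc_G$ it is the hypothesis $\Gc_G\subseteq I_{\Qc_G}\subseteq I$. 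Since $<$ is reverse lexicographic and the variable ordering is as stated, $\ini_<(x_Ix_{I'}-x_{I\cap I'}x_{I\cup I'})=x_Ix_{I'}$ (as already noted in the text) and $\ini_<(x_Iy_S-x_{I\cup\{s\}}y_{S\setminus\{s\}})=x_Iy_S$, while $\ini_<(\Gc_G)$ is squarefree by \cite[Example~1.3~(c)]{OHcompressed} because $G$ is perfect. Let $M$ be the monomial ideal generated by all these leading terms; it is squarefree, and $M\subseteq\ini_<(I)$ since $\Gc_P\cup\Gc_G\cup\Gc\subseteq I$. Thus both assertions of the theorem reduce to $M=\ini_<(I)$. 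Calling a monomial of ${\mathcal R}_{P,G}$ \emph{standard} when it is divisible by no generator of $M$, it suffices to prove that distinct standard monomials have distinct $\pi$-images: the $\pi$-images of the standard monomials already span $K[\Oc_P*\Qc_G]$ (they include those of the $\ini_<(I)$-standard monomials, which form a basis, and $M\subseteq\ini_<(I)$), so injectivity upgrades this spanning set to a $K$-basis and forces the standard monomials of $M$ and of $\ini_<(I)$ to coincide in each degree, i.e.\ $M=\ini_<(I)$.

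Next I would pin down the shape of the standard monomials: a monomial is standard exactly when (i) its $x$-support is a chain $I_1\subsetneq\cdots\subsetneq I_r$ in $\Jc(P)$, (ii) its part in the $y$'s and $z$ is a standard monomial of $I_{\Qc_G}$ for $<$, and (iii) for every poset ideal $I$ and every stable set $S$ occurring and every $s\in S\setminus I$, the set $I\cup\{s\}$ is not a poset ideal --- equivalently, every stable set occurring avoids the minimal elements of $P\setminus I$ for every poset ideal $I$ occurring. Given standard monomials $u=x_{I_1}^{a_1}\cdots x_{I_r}^{a_r}\cdot\yb$ and $v=x_{I'_1}^{a'_1}\cdots x_{I'_{r'}}^{a'_{r'}}\cdot\yb'$ (where $\yb,\yb'$ denote the parts in the $y$'s and $z$) with $\pi(u)=\pi(v)$, comparing exponents of $t_{d+1}$ gives $\sum a_i=\sum a'_i$ and then comparing exponents of $s$ gives $\deg\yb=\deg\yb'$. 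The strategy is to show the two $x$-parts coincide; once that is done, cancelling them leaves $\pi(\yb)=\pi(\yb')$ with $\yb,\yb'$ standard for $I_{\Qc_G}$, and since $\Gc_G$ is a Gr\"obner basis of $I_{\Qc_G}$ its standard monomials have distinct images, so $\yb=\yb'$ and $u=v$.

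The crux --- and what I expect to be the main obstacle --- is recovering $x_{I_1}^{a_1}\cdots x_{I_r}^{a_r}$ from $\pi(u)$. A chain multiset of $0/1$-vectors of prescribed total size $N$ is determined by its vector sum (the distinct ideals are the level sets $\{i:(\text{coordinate }i)\ge\tau\}$ for $\tau=1,\dots,N$); the difficulty is that in $\pi(u)$ the sum $\sum a_i\rho(I_i)$ is obscured by the additive contribution of the stable sets. Condition (iii) is precisely what disentangles the two: since every stable set present avoids the minimal elements of $P\setminus I_r$, those coordinates of $\pi(u)$ come entirely from the $x$-part, which should let one read off the top ideal $I_r$ and its multiplicity $a_r$; dividing $u$ and $v$ by $x_{I_r}$ (divisors of standard monomials are standard) and inducting on the total $x$-degree then completes the reconstruction. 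I would expect the bookkeeping in this peeling step --- propagating ``avoids the minimal elements of $P\setminus I$'' down the chain and across all stable sets present, perhaps after first disposing of the squarefree case --- to be the technical heart of the argument; note that (iii) is genuinely needed, since for $P=\{p_1,p_2\}$ an antichain and $G$ the single edge on $\{1,2\}$ the monomials $x_{\{p_1\}}y_{\{2\}}$ and $x_{\{p_2\}}y_{\{1\}}$ have the same $\pi$-image and are both ruled out only by (iii). An equivalent route would be Buchberger's criterion on $\Gc_P\cup\Gc_G\cup\Gc$: the $S$-pairs internal to $\Gc_P$ and to $\Gc_G$ reduce to zero since those are Gr\"obner bases, the pairs between $\Gc_P$ and $\Gc_G$ have coprime leading terms, the pairs of $\Gc$ with $\Gc_P$ and of $\Gc$ with itself are a finite poset-combinatorial check, and the pairs of $\Gc$ with $\Gc_G$ --- delicate because the elements of $\Gc_G$ are not written out explicitly --- are again handled via (iii).
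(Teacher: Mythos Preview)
Your direct standard-monomial argument is sound and genuinely different from the paper's proof, which proceeds by reduction rather than by hand. Via Lemma~\ref{changelemma} the paper identifies $\Oc_P*\Qc_G$ with $\conv\{\Oc_{P'}\cup(-\Qc_G\times\{0\})\}$ for $P'=\{p_{d+1}\}\oplus P^*$, enlarges $G$ to the perfect graph $\widehat{G}$ obtained by joining a new vertex $d+1$ to all of $[d]$, quotes the Gr\"obner basis of $I_{\conv\{\Oc_{P'}\cup(-\Qc_{\widehat{G}})\}}$ already computed in \cite[Proof of Proposition~3.1]{HTperfect}, observes that the variable $y_{\{d+1\}}$ occurs in exactly one basis element $x'_{\{p_{d+1}\}}y_{\{d+1\}}-z^2$, and applies the elimination theorem to remove it before translating back through the unimodular equivalence. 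That route is short but leans on an external black box; your approach is self-contained and isolates condition~(iii) as the combinatorial mechanism. On the peeling step you flag as the crux: your phrase ``those coordinates come entirely from the $x$-part'' is slightly off, since the coordinates at $\min(P\setminus I_r)$ are in fact zero, but the step goes through cleanly as a comparison rather than a reconstruction. If $I_r\ne I'_{r'}$, pick $p_j$ minimal in $I_r\setminus I'_{r'}$; then every $p_k<p_j$ lies in $I_r$ and hence (by minimality) in $I'_{r'}$, so $p_j\in\min(P\setminus I'_{r'})$, whence standardness of $v$ forces the $j$-th coordinate of $\pi(v)$ to vanish while $p_j\in I_r$ makes that coordinate of $\pi(u)$ at least $a_r\ge 1$. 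Thus $I_r=I'_{r'}$; now divide both monomials by $x_{I_r}$ (divisors of standard monomials are standard) and induct on the $t_{d+1}$-degree. No separate bookkeeping for multiplicities or for lower ideals in the chain is required.
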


\begin{proof}
In order to use Lemma \ref{changelemma}, we consider the polytope
$\Pc = \conv \{ \Oc_{P'} \cup (-\Qc_{\widehat{G}}) \}$, where $P'= \{p_{d+1}\} \oplus P^*$
and $\widehat{G}$ is a graph on $[d+1]$ whose edge set is 
$E(\widehat{G}) = E(G) \cup \{\{i, d+1\} : i  \in [d] \}$.
Since $G$ is perfect, it is easy to see that $\widehat{G}$ is perfect.
Let 
$$
{\mathcal R}_{P',\widehat{G}} = K[\{x_I'\}_{\emptyset \neq I \in {\mathcal J}(P')} \cup \{y_S\}_{\emptyset \neq S \in S (\widehat{G})}\cup \{ z  \}]
$$
denote the polynomial ring over $K$ and
define the surjective ring homomorphism
$\pi': {\mathcal R}_{P',\widehat{G}} \rightarrow K[\Pc]
\subset K[t_1^{\pm 1}, \ldots, t_{d+1}^{\pm 1}, s]$
by the following:
\begin{itemize}
\item
$\pi'( x_I' ) = \tb^{\rho(I)} s$,
where $\emptyset \neq I \in {\mathcal J}(P')$;
\item
$\pi'( y_S ) = \tb^{-\rho(S)}  s$,
where $\emptyset \neq S \in S(\widehat{G})$;
\item
$\pi'(z) = s$.
\end{itemize}
Then the toric ideal $I_\Pc$ is the kernel of $\pi'$.
Let $<_{\rm rlex}$ denote a reverse lexicographic order on
${\mathcal R}_{P',\widehat{G}} $ induced by an ordering of variable such that
\begin{itemize}
\item
$z <_{\rm rlex} y_S <_{\rm rlex} x_I'$ for any $S \in S(\widehat{G}) \setminus \{\emptyset\}$ and $I \in \Jc (P') \setminus \{\emptyset\}$;
\item
$x_{I'}' <_{\rm rlex} x_{I}'$ for any $I, I' \in \Jc (P') \setminus \{\emptyset\}$ such that $I' \subsetneq I$;
\item
$y_{S'} <_{\rm rlex} y_{S}$ for any $S, S' \in S(\widehat{G}) \setminus \{\emptyset\}$ such that $S' \subsetneq S$.
\end{itemize}
In \cite[Proof of Proposition 3.1]{HTperfect}, it was shown that 
$\overline{\Gc} :=\Gc_{P'}' \cup \Gc_{\widehat{G}}' \cup \Gc'$ is a Gr\"obner basis of $I_\Pc$ with respect to $<_{\rm rlex}$, where 
\begin{eqnarray*}
\Gc_{P'}' &=& \{ x_I' x_{I'}' - x_{I \cap I'}' x_{I \cup I'}' : I, I' \in \Jc (P') \setminus \{\emptyset\},\ 
I \not\subset I', I \not\supset I'\},\\
\Gc' &=&  \{ x_I' y_S - x_{I \setminus \{s\}}' y_{S \setminus \{s\}}: 
I \in \Jc (P') \setminus \{\emptyset\}, \ S \in S(\widehat{G})\setminus \{\emptyset\}, s \in \max(I) \cap S
 \},
\end{eqnarray*}
and $\Gc_{\widehat{G}}' \subset K[\{y_S\}_{\emptyset \neq S \in S (\widehat{G})}\cup \{ z  \}]$ is the reduced Gr\"obner basis of $I_{\Qc_{\widehat{G}}}$ with respect to
a reverse lexicographic order induced by $<_{\rm rlex}$.
(Here, we regard $x_\emptyset'$ and $y_\emptyset$ as $z$.)
Moreover, the corresponding initial ideal ${\rm in}_{<_{\rm rlex}} (I_\Pc)$ is squarefree.
Since the vertex $d+1$ is adjacent to all other vertices in $\widehat{G}$,
$d+1$ belongs to $S \in S(\widehat{G})$ if and only if $S = \{d+1\}$.
It then follows that $y_{\{d+1\}}$ does not appear in $\Gc_{\widehat{G}}'$.
Moreover, we have $S \setminus \{s\} \neq \{d+1\}$ if $s \in S \in S(\widehat{G})$.
Hence $y_{\{d+1\}}$ appears only in a binomial
$x_{\{p_{d+1}\}}' y_{\{d+1\}} - z^2 \in \Gc'$ whose initial monomial is the first monomial.
By the elimination theorem \cite[Theorem 1.4.1]{dojo}, the set 
$\overline{\Gc} \setminus \{x_{\{p_{d+1}\}}' y_{\{d+1\}} - z^2\}$ is a Gr\"obner basis of $I_{\Pc'}$ with respect to $<_{\rm rlex}$, where $\Pc'=\conv \{ \Oc_{P'} \cup (- \Qc_G \times \{0\})  \}$.

Thus by Lemma \ref{changelemma}, 
 a Gr\"obner basis of $I_{\Oc_P * \Qc_G}$ with respect to $<_{P,G}$
is obtained by replacing variable
$x_I'$ with the variable $x_{P \setminus I}$
in
$\overline{\Gc} \setminus \{x_{\{p_{d+1}\}}' y_{\{d+1\}} - z^2\}$.
\end{proof}

From Theorem \ref{thm:groebner}, we immediately obtain the following corollary:
\begin{Corollary}
	\label{cor:minkowskiOS}
	Let $P=\{p_1,\ldots,p_d\}$ be a finite poset and $G$ a perfect graph on $[d]$. 
	Then $I_{\Oc_P+\Qc_G}$ possesses a squarefree initial ideal with respect to a reverse lexicographic order
and both $\Oc_P*\Qc_G$ and $\Oc_P+\Qc_G$ have a regular unimodular triangulation and IDP.
\end{Corollary}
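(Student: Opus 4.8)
The plan is to obtain Corollary~\ref{cor:minkowskiOS} as a direct consequence of Theorem~\ref{thm:initialOS} together with Theorem~\ref{thm:groebner} applied with $m=2$, $\Pc_1=\Oc_P$ and $\Pc_2=\Qc_G$. Theorem~\ref{thm:initialOS} already supplies the Gr\"obner basis $\Gc_P\cup\Gc_G\cup\Gc$ of $I_{\Oc_P*\Qc_G}$ with respect to the reverse lexicographic order $<_{P,G}$ whose initial ideal is squarefree; letting $\ell$ be the largest degree occurring among the monomial generators of $\ini_{<_{P,G}}(I_{\Oc_P*\Qc_G})$, this says $I_{\Oc_P*\Qc_G}$ has a squarefree initial ideal of degree at most $\ell$, so the first hypothesis of Theorem~\ref{thm:groebner} is met.

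Next I would verify the only remaining hypothesis of Theorem~\ref{thm:groebner}, the lattice condition $\sum_{\ab\in\Pc_i\cap\ZZ^d}\ZZ(\ab,1)=\ZZ^{d+1}$ for $\Pc_i\in\{\Oc_P,\Qc_G\}$. For $\Oc_P$ one argues as follows: since $\mathbf 0=\rho(\emptyset)$ is a lattice point of $\Oc_P$, the vector $(\mathbf 0,1)$ lies in the lattice; and if $p_{i_1},\dots,p_{i_d}$ is a linear extension of $P$, then each truncation $I_k=\{p_{i_1},\dots,p_{i_k}\}$ is a poset ideal and $\rho(I_k)-\rho(I_{k-1})=\eb_{i_k}$, so each $(\eb_j,0)=(\rho(I_j),1)-(\rho(I_{j-1}),1)$ lies in the lattice as well, whence it is all of $\ZZ^{d+1}$. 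The same argument applies to $\Qc_G$ verbatim, using $\emptyset\in S(G)$ and $\{j\}\in S(G)$ for each $j\in[d]$. Having checked both hypotheses, Theorem~\ref{thm:groebner} yields at once that $I_{\Oc_P+\Qc_G}$ possesses a squarefree initial ideal of degree at most $\ell$ and that both $\Oc_P*\Qc_G$ and $\Oc_P+\Qc_G$ have a regular unimodular triangulation and IDP.

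The one point that I expect to require a little extra care — the mild main obstacle — is the assertion that the squarefree initial ideal of $I_{\Oc_P+\Qc_G}$ can be taken with respect to a \emph{reverse lexicographic} order, since Theorem~\ref{thm:groebner} as stated only guarantees some squarefree initial ideal. To settle this I would trace through the generalized nested configuration $\Ac[\Bc_1,\Bc_2]$ used in the proof of Theorem~\ref{thm:groebner}, identifying $(\Oc_P+\Qc_G)\cap\ZZ^d$ with the sums $\ab_1+\ab_2$ of lattice points of the two summands, and check that the term order produced by Shibuta's construction out of the reverse lexicographic order $<_{P,G}$ is again reverse lexicographic on the variables indexing the Minkowski sum; equivalently, that $\ini(I_{\Oc_P+\Qc_G})$ is the (radical-preserving) image of $\ini_{<_{P,G}}(I_{\Oc_P*\Qc_G})$ under this identification, so that it suffices to order the monomials of $\Rc_{P,G}$ compatibly. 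Everything else in the corollary is an immediate citation of the two theorems above.
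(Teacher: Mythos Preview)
Your approach is correct and is essentially the paper's own argument: the paper derives the corollary in one line, ``From Theorem~\ref{thm:groebner}, we immediately obtain the following corollary,'' using Theorem~\ref{thm:initialOS} for the squarefree initial ideal of $I_{\Oc_P*\Qc_G}$ as input. Your explicit verification of the lattice condition $\sum_{\ab\in\Pc_i\cap\ZZ^d}\ZZ(\ab,1)=\ZZ^{d+1}$ for $\Oc_P$ and $\Qc_G$ is a detail the paper leaves implicit, and your caution about the ``reverse lexicographic'' clause is well placed --- the paper does not justify that part separately either, so you are not missing anything in the published proof.
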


Let $P=\{p_1,\ldots,p_d\}$ and $Q=\{q_1,\ldots,q_d\}$ be finite posets.
Let $G$ be the comparability graph of $Q$.
It is known that $G$ is perfect and the chain polytope $\Cc_Q \subset \RR^d$ of 
$Q$ is the stable set polytope $\Qc_{G} \subset \RR^d$ of $G$.
Hibi--Li \cite{hibili} proved that the reduced Gr\"obner basis $\Gc_G$ of $I_{\Qc_G}$ with respect to a reverse lexicographic order
induced by $<_{P,G}$ consists of quadratic binomials.
Thus we have the following by combining Theorems \ref{thm:groebner} and  \ref{thm:initialOS}.

\begin{Corollary}
Let $P=\{p_1,\ldots,p_d\}$ and $Q=\{q_1,\ldots,q_d\}$ be finite posets.
Then each of $I_{\Oc_P * \Cc_Q}$ and $I_{\Oc_P+\Cc_Q}$ possesses a squarefree quadratic initial ideal
with respect to a reverse lexicographic order.
\end{Corollary}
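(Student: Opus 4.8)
The plan is to reduce this statement to the results already established for Cayley and Minkowski sums of an order polytope and a stable set polytope of a perfect graph. First I would recall that, if $G$ denotes the comparability graph of $Q$, then $G$ is perfect and $\Cc_Q$ coincides with the stable set polytope $\Qc_G$; hence $I_{\Oc_P * \Cc_Q} = I_{\Oc_P * \Qc_G}$ and $I_{\Oc_P + \Cc_Q} = I_{\Oc_P + \Qc_G}$, so the task reduces to keeping track of the degrees of the Gr\"obner bases furnished by Theorem~\ref{thm:initialOS} and Theorem~\ref{thm:groebner}.

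For the Cayley sum I would invoke Theorem~\ref{thm:initialOS}, which tells us that $\Gc_P \cup \Gc_G \cup \Gc$ is a Gr\"obner basis of $I_{\Oc_P * \Qc_G}$ with respect to the reverse lexicographic order $<_{P,G}$ and that the associated initial ideal is squarefree. It then suffices to observe that every binomial in $\Gc_P$ has the quadratic form $x_I x_{I'} - x_{I \cap I'} x_{I \cup I'}$, every binomial in $\Gc$ has the quadratic form $x_I y_S - x_{I \cup \{s\}} y_{S \setminus \{s\}}$, and every binomial in $\Gc_G$ is quadratic by the theorem of Hibi--Li \cite{hibili} recalled just above. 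Therefore $\ini_{<_{P,G}}(I_{\Oc_P * \Qc_G})$ is generated by squarefree monomials of degree $2$; that is, it is a squarefree quadratic initial ideal.

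For the Minkowski sum I would apply Theorem~\ref{thm:groebner} with $m = 2$, $\Pc_1 = \Oc_P$, $\Pc_2 = \Cc_Q$ and $\ell = 2$. The hypothesis $\sum_{\ab \in \Pc_i \cap \ZZ^d} \ZZ(\ab, 1) = \ZZ^{d+1}$ holds for each of $\Oc_P$ and $\Cc_Q$ --- this is well known and follows, for instance, from the existence of the unimodular triangulations recalled in Section~\ref{sec:pre} --- and the preceding paragraph provides the required squarefree initial ideal of $I_{\Pc_1 * \Pc_2}$ of degree at most $2$. Thus Theorem~\ref{thm:groebner} yields a squarefree initial ideal of $I_{\Oc_P + \Cc_Q}$ of degree at most $2$ with respect to a reverse lexicographic order --- this is the conclusion of Corollary~\ref{cor:minkowskiOS}, now sharpened by the degree bound $\ell = 2$ inherited from the Cayley-sum computation. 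To upgrade ``degree at most $2$'' to ``quadratic'', I would use that $\Oc_P + \Cc_Q$ is a lattice polytope, so that its toric ideal is the ideal of a configuration of pairwise distinct points and hence contains no nonzero element of degree $1$; consequently the minimal generators of this initial ideal all have degree exactly $2$.

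I do not anticipate a genuine obstacle here: essentially all of the content resides in Theorems~\ref{thm:initialOS} and~\ref{thm:groebner} and in the quadraticity theorem of Hibi--Li, and this corollary merely assembles them together with the elementary degree bookkeeping above. The one point that deserves a careful sentence is confirming that the reverse lexicographic character of the term order survives the passage through the generalized nested configuration construction underlying Theorem~\ref{thm:groebner}; but this is already implicit in the proof of Corollary~\ref{cor:minkowskiOS}, so no new argument is required.
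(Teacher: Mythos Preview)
Your proposal is correct and follows essentially the same approach as the paper: identify $\Cc_Q$ with $\Qc_G$ for $G$ the comparability graph of $Q$, invoke Hibi--Li to get quadraticity of $\Gc_G$, and then combine Theorems~\ref{thm:initialOS} and~\ref{thm:groebner} with $\ell=2$. The paper's argument is exactly this assembly, only stated more tersely.
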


\section{The Ehrhart $\delta$-polynomial of $\Oc_P*\Qc_G$}
\label{sec:gorensteinOS}

Let $P=\{p_1,\ldots,p_d\}$ be a finite poset and  
	$G$ a perfect graph on $[d]$.
In this section, 
we will study the Ehrhart $\delta$-polynomial of $\Oc_P*\Qc_G$.
For lattice polytopes $\Pc, \Qc \subset \RR^d$, let 
$\Gamma(\Pc, \Qc)=
\conv\{
\Pc \cup (-\Qc)
\}$.
In \cite{HTorderstable}, it was shown that the polytope 
$\Gamma(\Oc_{P}, \Qc_{G})$
is a reflexive polytope with IDP
and
$$
	\delta(\Gamma(\Oc_{P}, \Qc_{G}), \lambda)=\delta(\Gamma(\Cc_{P}, \Qc_{G}),\lambda)
$$
holds.
Using this fact, we have the following.

\begin{Theorem}
	\label{thm:delta}
	Let $P=\{p_1,\ldots,p_d\}$ be a finite poset and  
	$G$ a perfect graph on $[d]$.
	Then one has
	\[
	\delta(\Oc_{P} * \Qc_{G}, \lambda)=\delta(\Gamma(\Oc_{P}, \Qc_{G}), \lambda).
	\]
	In particular, $\Oc_{P} * \Qc_{G}$ is Gorenstein of index $2$ and
	\[\Vol(\Oc_{P} * \Qc_{G})=\Vol(\Gamma(\Oc_{P}, \Qc_{G})).\] 
\end{Theorem}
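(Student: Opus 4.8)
The plan is to prove the identity $\delta(\Oc_{P}*\Qc_{G},\lambda)=\delta(\Gamma(\Oc_{P},\Qc_{G}),\lambda)$ by routing through the auxiliary polytope $\Gamma(\Oc_{P'},\Qc_{\widehat G})$ that already appears in the proof of Theorem~\ref{thm:initialOS}, where $P'=\{p_{d+1}\}\oplus P^{*}$ and $\widehat G$ is the graph on $[d+1]$ with $E(\widehat G)=E(G)\cup\{\{i,d+1\}:i\in[d]\}$; recall that $\widehat G$ is perfect since $G$ is, and set $\Pc'=\conv\{\Oc_{P'}\cup(-\Qc_{G}\times\{0\})\}$, so that $\Pc'$ is unimodularly equivalent to $\Oc_{P}*\Qc_{G}$ by Lemma~\ref{changelemma}, in particular $\delta(\Pc',\lambda)=\delta(\Oc_{P}*\Qc_{G},\lambda)$. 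I would first record two facts about $\Gamma(\Oc_{P'},\Qc_{\widehat G})$. \textbf{(I)} From the Gr\"obner basis analysis in the proof of Theorem~\ref{thm:initialOS}, $\ini_{<_{\rm rlex}}(I_{\Gamma(\Oc_{P'},\Qc_{\widehat G})})$ is $\ini_{<_{\rm rlex}}(I_{\Pc'})$ with the single extra squarefree monomial $x'_{\{p_{d+1}\}}y_{\{d+1\}}$ (the initial term of $x'_{\{p_{d+1}\}}y_{\{d+1\}}-z^{2}$) adjoined, and $x'_{\{p_{d+1}\}}$ occurs in no minimal generator of $\ini_{<_{\rm rlex}}(I_{\Pc'})$: the ideal $\{p_{d+1}\}$ is comparable to every nonempty poset ideal of $P'$, so it gives no initial monomial from $\Gc'_{P'}$; the only binomial of $\Gc'$ whose leading term involves $x'_{\{p_{d+1}\}}$ is $x'_{\{p_{d+1}\}}y_{\{d+1\}}-z^{2}$ (because $d+1$ is universal in $\widehat G$); and $\Gc'_{\widehat G}$ involves no $x'$-variable. \textbf{(II)} Since $d+1$ is universal in $\widehat G$ and minimal in $P'$, one has $\Qc_{\widehat G}=\conv\{(\Qc_{G}\times\{0\})\cup\{\eb_{d+1}\}\}$ and $\Cc_{P'}=\conv\{(\Cc_{P}\times\{0\})\cup\{\eb_{d+1}\}\}$, so $\Gamma(\Cc_{P'},\Qc_{\widehat G})=\conv\{(\Gamma(\Cc_{P},\Qc_{G})\times\{0\})\cup\{\eb_{d+1},-\eb_{d+1}\}\}$ is the bipyramid over $\Gamma(\Cc_{P},\Qc_{G})$.

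Next I would compute $\delta(\Gamma(\Oc_{P'},\Qc_{\widehat G}),\lambda)$ in two ways. Using \textbf{(I)}: write $\Rc'$ for the polynomial ring in the variables of $\Rc_{P',\widehat G}$ other than $y_{\{d+1\}}$ and $A=\Rc'/\ini_{<_{\rm rlex}}(I_{\Pc'})$; then there is a graded isomorphism $\Rc_{P',\widehat G}/\ini_{<_{\rm rlex}}(I_{\Gamma(\Oc_{P'},\Qc_{\widehat G})})\iso A[y_{\{d+1\}}]/(x'_{\{p_{d+1}\}}y_{\{d+1\}})$, and since $x'_{\{p_{d+1}\}}$ is a nonzerodivisor on $A$ the degree-$2$ element $x'_{\{p_{d+1}\}}y_{\{d+1\}}$ is a nonzerodivisor on $A[y_{\{d+1\}}]$. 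A Gr\"obner basis preserves Hilbert series, and both $\Gamma(\Oc_{P'},\Qc_{\widehat G})$ (by \cite{HTorderstable}) and $\Pc'$ (by Corollary~\ref{cor:minkowskiOS}) are IDP of dimension $d+1$, so the Hilbert series of $\Rc_{P',\widehat G}/\ini_{<_{\rm rlex}}(I_{\Gamma(\Oc_{P'},\Qc_{\widehat G})})$ and of $A$ are $\delta(\Gamma(\Oc_{P'},\Qc_{\widehat G}),\lambda)/(1-\lambda)^{d+2}$ and $\delta(\Pc',\lambda)/(1-\lambda)^{d+2}$. Adjoining the variable $y_{\{d+1\}}$ multiplies the latter by $\tfrac{1}{1-\lambda}$ and factoring out the regular element of degree $2$ multiplies by $1-\lambda^{2}$, whence $\delta(\Gamma(\Oc_{P'},\Qc_{\widehat G}),\lambda)=(1+\lambda)\,\delta(\Pc',\lambda)=(1+\lambda)\,\delta(\Oc_{P}*\Qc_{G},\lambda)$. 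Using \textbf{(II)}: the $k$-th dilate of a bipyramid over a lattice polytope $\Qc\ni\mathbf{0}$ has $2\sum_{i=0}^{k}|i\Qc\cap\ZZ^{d}|-|k\Qc\cap\ZZ^{d}|$ lattice points, which gives $\delta(\Gamma(\Cc_{P'},\Qc_{\widehat G}),\lambda)=(1+\lambda)\,\delta(\Gamma(\Cc_{P},\Qc_{G}),\lambda)$.

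Finally I would invoke \cite{HTorderstable} for the pairs $(P',\widehat G)$ and $(P,G)$, giving $\delta(\Gamma(\Oc_{P'},\Qc_{\widehat G}),\lambda)=\delta(\Gamma(\Cc_{P'},\Qc_{\widehat G}),\lambda)$ and $\delta(\Gamma(\Cc_{P},\Qc_{G}),\lambda)=\delta(\Gamma(\Oc_{P},\Qc_{G}),\lambda)$. Chaining the four equalities gives
\[
(1+\lambda)\,\delta(\Oc_{P}*\Qc_{G},\lambda)=(1+\lambda)\,\delta(\Gamma(\Oc_{P},\Qc_{G}),\lambda),
\]
and cancelling $1+\lambda$ yields the asserted identity $\delta(\Oc_{P}*\Qc_{G},\lambda)=\delta(\Gamma(\Oc_{P},\Qc_{G}),\lambda)$. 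For the remaining assertions: $\Gamma(\Oc_{P},\Qc_{G})$ is reflexive of dimension $d$, so its $\delta$-polynomial is symmetric of degree $d$; hence $\delta(\Oc_{P}*\Qc_{G},\lambda)$ is symmetric of degree $d$, and since $\dim(\Oc_{P}*\Qc_{G})=d+1$ its codegree equals $(d+1)+1-d=2$, so by the criterion recalled in Section~\ref{sec:pre} the polytope $\Oc_{P}*\Qc_{G}$ is Gorenstein of index $2$; evaluating at $\lambda=1$ gives $\Vol(\Oc_{P}*\Qc_{G})=\Vol(\Gamma(\Oc_{P},\Qc_{G}))$.

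I expect the main obstacle to be fact \textbf{(I)}: one must extract from the proof of Theorem~\ref{thm:initialOS} not merely that deleting the binomial $x'_{\{p_{d+1}\}}y_{\{d+1\}}-z^{2}$ from the Gr\"obner basis leaves a Gr\"obner basis of $I_{\Pc'}$, but the sharper fact that $x'_{\{p_{d+1}\}}$ is a regular element on the associated Stanley--Reisner ring, which is precisely what forces the passage from $\Gamma(\Oc_{P'},\Qc_{\widehat G})$ to $\Pc'$ to cost exactly a factor $1+\lambda$ in the Ehrhart series. The bipyramid computation and the two applications of \cite{HTorderstable} should then be routine.
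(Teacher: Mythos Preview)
Your argument is correct, but it takes a longer detour than the paper's. The paper never passes through the $(d+1)$-dimensional polytope $\Gamma(\Oc_{P'},\Qc_{\widehat G})$ at all. Instead it compares $\Pc'$ directly with the $d$-dimensional polytope $\Gamma(\Oc_{P^*},\Qc_G)$: writing $x''_I=x'_{I\cup\{p_{d+1}\}}$ for $I\in\Jc(P^*)$ identifies the variables of $\Pc'$ with those of $\Gamma(\Oc_{P^*},\Qc_G)$ together with one extra variable $x''_\emptyset=x'_{\{p_{d+1}\}}$, and the very fact you isolate in \textbf{(I)}---that $x'_{\{p_{d+1}\}}$ appears in no minimal generator of $\ini_{<_{\rm rlex}}(I_{\Pc'})$---shows (after the substitution $x''_\emptyset\mapsto z$, which leaves the initial ideal unchanged since $z$ never occurs in a reverse-lex initial monomial) that the two initial ideals have the same minimal generators. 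Hence the Stanley--Reisner ring for $\Pc'$ is a polynomial extension in one variable of that for $\Gamma(\Oc_{P^*},\Qc_G)$, so the $h$-polynomials coincide and $\delta(\Oc_P*\Qc_G,\lambda)=\delta(\Gamma(\Oc_{P^*},\Qc_G),\lambda)$ with no $(1+\lambda)$ factor to cancel. The passage from $P^*$ back to $P$ is then handled by two applications of \cite{HTorderstable} together with $\Cc_{P^*}=\Cc_P$.

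So the ``main obstacle'' you anticipated is precisely the paper's key step, but the paper exploits it more efficiently: rather than adjoining $y_{\{d+1\}}$ and then quotienting by a degree-$2$ regular element (costing $1+\lambda$), and then undoing that factor via the bipyramid computation, it simply observes that the extra variable is already free. Your route is sound and the bipyramid identity is a nice independent check, but the regular-element/bipyramid pair is doing work that cancels itself out.
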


\begin{proof}
By Lemma~\ref{changelemma}, $\Oc_P * \Qc_G$ is unimodularly equivalent to
$$\Pc'=\conv \{ \Oc_{P'} \cup (- \Qc_G \times \{0\})  \}
=
\conv \{ (\Oc_{P^*} \times \{1\}) \cup (- \Qc_G \times \{0\})  \},$$
where $P'=\{p_{d+1} \}\oplus P^*$.
Work with the same notation as in Proof of Theorem~\ref{thm:initialOS}.
It was proved that
$$
\widetilde{\Gc} = 
(\Gc_{P'}' \cup \Gc_{\widehat{G}}' \cup \Gc') \setminus \{x_{\{p_{d+1}\}}' y_{\{d+1\}} - z^2\}$$ is a Gr\"obner basis of $I_{\Pc'}$ with respect to $<_{\rm rlex}$, where 
\begin{eqnarray*}
\Gc_{P'}' 
&=& \{ x_I' x_{I'}' - x_{I \cap I'}' x_{I \cup I'}' : I, I' \in \Jc (P') \setminus \{\emptyset\},\ 
I \not\subset I', I \not\supset I'\},\\
\Gc' &=&  \{ x_I' y_S - x_{I \setminus \{s\}}' y_{S \setminus \{s\}}: 
I \in \Jc (P') \setminus \{\emptyset\}, \ S \in S(\widehat{G})\setminus \{\emptyset\}, s \in \max(I) \cap S
 \},
\end{eqnarray*}
where $x_\emptyset' = y_\emptyset = z$
and $\Gc_{\widehat{G}}' \subset K[\{y_S\}_{\emptyset \neq S \in S (\widehat{G})}\cup \{ z  \}]$ is the reduced Gr\"obner basis of $I_{\Qc_{\widehat{G}}}$ with respect to
a reverse lexicographic order induced by $<_{\rm rlex}$.
Note that $I \in \Jc(P')$ is not empty if and only if $p_{d+1} \in I$.
Hence we have
\begin{eqnarray*}
\Gc_{P'}' 
&=& \{ x_I'' x_{I'}'' - x_{I \cap I'}'' x_{I \cup I'}''  : I, I' \in \Jc (P^*),\ 
I \not\subset I', I \not\supset I'\},\\
\Gc' &=&  \{ x_I'' y_S - x_{I \setminus \{s\}}'' y_{S \setminus \{s\}}: 
I \in \Jc (P^*), \ S \in S(G)\setminus \{\emptyset\}, s \in \max(I) \cap S
 \}\\
& & \cup \ \  \{x_{\{p_{d+1}\}}' y_{\{d+1\}} - z^2\},
\end{eqnarray*}
where 
$x''_I = x_{I\cup \{p_{d+1}\}}'$ for each $I \in \Jc(P^*)$.
Moreover, since the variable $y_{\{d+1\}}$ does not appear in $\Gc_{\widehat{G}}'$,
the set $\Gc_{\widehat{G}}' \subset K[\{y_S\}_{\emptyset \neq S \in S (G)}\cup \{ z  \}]$ is the reduced Gr\"obner basis of $I_{\Qc_{G}}$ with respect to
a reverse lexicographic order induced by $<_{\rm rlex}$.
Then one can obtain a Gr\"obner basis of $I_{\Gamma(\Oc_{P^*}, \Qc_{G})}$ by replacing the variable
$x''_\emptyset (= x_{\{p_{d+1}\}}')$ with $z$ in $\widetilde{\Gc}$
from \cite[Proof of Proposition 3.1]{HTperfect}.
Since the variable $x_{\{p_{d+1}\}}'$ does not appear in 
the initial monomial of the binomials in $\widetilde{\Gc}$, 
the initial ideals of $I_{\Pc'}$ and $I_{\Gamma(\Oc_{P^*}, \Qc_{G})}$
have the same minimal set of monomial generators.
Hence 
$$
\left.
K[\{x_I'\}_{\emptyset \neq I \in {\mathcal J}(P')} \cup \{y_S\}_{\emptyset \neq S \in S (G)}\cup \{ z  \}] \right/ {\rm in}_{<_{\rm rlex}} (I_{\Pc'})
$$
is isomorphic to 
$$
\left( \left.
 K[\{x_I''\}_{\emptyset \neq I \in {\mathcal J}(P^*)} \cup \{y_S\}_{\emptyset \neq S \in S (G)}\cup \{ z  \}] \right/ {\rm in}_{<_{\rm rlex}} (I_{\Gamma(\Oc_{P^*}, \Qc_{G})})
\right)
\left[x_{\emptyset}'' \right].
$$
In general, 
if a lattice polytope $\Pc$ is IDP, then
the Ehrhart $\delta$-polynomial of $\Pc$
coincides with the $h$-polynomial $h(K[\Pc], \lambda)$
of the toric ring $K[\Pc] \simeq K[\xb]/I_\Pc$.
Moreover, for a monomial order $<$ on $K[\xb]$,
we have $h(K[\xb]/I_\Pc, \lambda) = h(K[\xb]/{\rm in}_< (I_\Pc), \lambda) $.
Thus we have 
\begin{eqnarray*}
\delta(\Oc_{P} * \Qc_{G}, \lambda) 
&=& 
\delta(\Gamma(\Oc_{P^*}, \Qc_{G}), \lambda) \\
&=& 
\delta(\Gamma(\Cc_{P^*}, \Qc_{G}), \lambda) \\
&=& 
\delta(\Gamma(\Cc_{P}, \Qc_{G}), \lambda) \\
&=& 
\delta(\Gamma(\Oc_{P}, \Qc_{G}), \lambda)
\end{eqnarray*}
by \cite[Theorem 1.4]{HTorderstable} and  $\Cc_{P} = \Cc_{P^*}$.
Since $\Gamma(\Oc_{P}, \Qc_{G})$ is reflexive
(Gorenstein of index $1$), $\Oc_{P} * \Qc_{G}$ is Gorenstein of index $2$.
\end{proof}

A {\em linear extension} of a poset $P=\{p_1,\ldots, p_d\}$ is a permutation 
$\sigma = i_1 i_2 \cdots i_d$ of $[d]$ which satisfies
$i_a < i_b$ if $p_{i_a} < p_{i_b}$ in $P$. 
For $W \subset [d]$,
we let $(\Delta_W(P,Q),\leq_{W})$ be the ordinal sum of $P_W$ and $Q_{\overline{W}}$, where $\overline{W}=[d] \setminus W$.
From \cite[Corollary 1.5]{Tvolume}, we obtain the following corollary.

\begin{Corollary}
	Let $P=\{p_1,\ldots,p_d\}$ and $Q=\{q_1,\ldots,q_d\}$ be finite posets.
	Then we have
	\[
	\textnormal{Vol}(\Oc_P * \Cc_Q)=\sum\limits_{W \subset [d]}e(\Delta_W(P,Q)),\]
	where $e(\Delta)$ is the number of linear extensions of a finite poset $\Delta$
\end{Corollary}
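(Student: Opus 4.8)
The plan is to reduce the statement to a volume formula already available in the literature, using Theorem~\ref{thm:delta} as the bridge. First I would take $G$ to be the comparability graph of the poset $Q$ on $[d]$. As recalled in Section~\ref{sec:initialOS}, this $G$ is a perfect graph and $\Cc_Q = \Qc_G$, so Theorem~\ref{thm:delta} applies verbatim with this choice of $G$. Specializing the $\delta$-polynomial identity of Theorem~\ref{thm:delta} at $\lambda = 1$ then yields
\[
\Vol(\Oc_P * \Cc_Q) = \Vol(\Oc_P * \Qc_G) = \Vol(\Gamma(\Oc_P, \Qc_G)) = \Vol(\Gamma(\Oc_P, \Cc_Q)).
\]

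Next I would invoke the intermediate identities established inside the proof of Theorem~\ref{thm:delta}: there it is shown, via \cite[Theorem 1.4]{HTorderstable} and the elementary fact $\Cc_P = \Cc_{P^*}$ (the antichains of $P$ and of $P^*$ coincide), that $\delta(\Gamma(\Oc_P, \Qc_G), \lambda) = \delta(\Gamma(\Cc_P, \Qc_G), \lambda)$. Evaluating at $\lambda = 1$ gives $\Vol(\Gamma(\Oc_P, \Cc_Q)) = \Vol(\Gamma(\Cc_P, \Cc_Q))$, so that $\Vol(\Oc_P * \Cc_Q)$ equals the normalized volume of $\Gamma(\Cc_P, \Cc_Q)$ (equivalently of $\Gamma(\Oc_P, \Cc_Q)$); I would keep both forms available since the cited volume formula is phrased for one particular one of them.

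Finally I would apply \cite[Corollary 1.5]{Tvolume}, which expresses this normalized volume as $\sum_{W \subset [d]} e(\Delta_W(P,Q))$, where $\Delta_W(P,Q) = P_W \oplus Q_{\overline{W}}$ is the ordinal sum introduced just above the statement. Concatenating the displayed equalities then gives the claim. The only genuinely nontrivial point --- the \emph{main obstacle} --- is bookkeeping: checking that the polytope and the normalization of volume used in \cite{Tvolume} match $\Gamma(\Cc_P, \Cc_Q)$ (or $\Gamma(\Oc_P, \Cc_Q)$) exactly, and that the indexing set $W \subset [d]$ there agrees with the one here under the identification of $P$, $Q$ with subsets of $[d]$. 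Once these conventions are aligned, no new combinatorial or algebraic work is needed.
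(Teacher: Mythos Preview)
Your proposal is correct and follows exactly the route the paper intends: the paper's own ``proof'' is just the sentence ``From \cite[Corollary 1.5]{Tvolume}, we obtain the following corollary,'' which, unpacked, is precisely your argument---specialize Theorem~\ref{thm:delta} to $G$ the comparability graph of $Q$ (so $\Qc_G=\Cc_Q$), use the identity $\delta(\Gamma(\Oc_P,\Qc_G),\lambda)=\delta(\Gamma(\Cc_P,\Qc_G),\lambda)$ recorded just before Theorem~\ref{thm:delta} (and in its proof) to pass to $\Gamma(\Cc_P,\Cc_Q)$, and then quote the volume formula from \cite{Tvolume}. The only cosmetic remark is that the $\Oc\leftrightarrow\Cc$ identity you cite is already stated in the paragraph preceding Theorem~\ref{thm:delta}, so you need not dig into the proof to justify it.
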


\section{The proof of Theorem \ref{thm:main}}
\label{sec:proofmain}

In this section, we complete the proof of Theorem \ref{thm:main}.
Recall that an \textit{odd hole} of a graph is an induced odd cycle of length $\geq 5$ and an \textit{odd antihole} of a graph is the complementary graph of an odd hole.
In order to prove Theorem  \ref{thm:main}, we need the {\em Strong Perfect Graph Theorem}:

\begin{Lemma}[{\cite[1.2]{sptheorem}}]
	\label{strong}
	A graph is perfect if and only if it has neither odd holes nor odd antiholes as induced subgraphs.
\end{Lemma}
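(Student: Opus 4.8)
The plan is to prove the two implications separately, since one is elementary and the other carries the whole weight of the statement. First I would dispose of the ``only if'' direction. By definition perfection is hereditary---it is demanded of every induced subgraph---so it is enough to check that no odd hole and no odd antihole is perfect. An odd hole $C_{2k+1}$ with $k\ge 2$ has clique number $2$ but chromatic number $3$, hence is imperfect. For an odd antihole, which by definition is the complement of an odd hole, I would invoke the Weak Perfect Graph Theorem of Lov\'asz, namely that $G$ is perfect if and only if $\overline{G}$ is perfect; since the odd hole is imperfect, so is its complement. Thus any graph containing either configuration as an induced subgraph fails to be perfect.

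The hard part is the converse: every \emph{Berge graph}---one with neither odd holes nor odd antiholes---is perfect. I would argue by contradiction, taking a Berge graph $G$ that is imperfect with $|V(G)|$ as small as possible. Following Chudnovsky--Robertson--Seymour--Thomas \cite{sptheorem}, the strategy is to couple a structural decomposition theorem for Berge graphs with the indecomposability of minimal imperfect graphs. The core structural statement is that every Berge graph is either \emph{basic}---that is, bipartite, the complement of a bipartite graph, the line graph of a bipartite graph, the complement of such a line graph, or a double split graph---or else $G$ or $\overline{G}$ admits a $2$-join, or $G$ has a balanced skew partition. I would then assemble two facts: (a) each basic class is directly seen to be perfect; and (b) a minimal imperfect graph admits none of these decompositions (for the two $2$-join cases this is due to Cornu\'ejols--Cunningham, and the exclusion of a balanced skew partition is Chv\'atal's skew-partition conjecture, settled as part of the main theorem). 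Granting (a) and (b), the minimal counterexample $G$ is forced to be basic and hence perfect, contradicting its choice.

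The genuine obstacle is the structural decomposition theorem underlying step (b): essentially all the depth of the Strong Perfect Graph Theorem lives there. Its proof dissects how unavoidable configurations---principally prisms, wheels, and long even paths---can be embedded in a Berge graph, and shows that each possibility forces one of the basic shapes or one of the decompositions. This analysis is famously long and intricate, which is exactly why the result is invoked here as Lemma~\ref{strong} rather than reproved; for the present paper only its statement is needed, so I would treat the decomposition theorem as a black box and record only the short deduction above.
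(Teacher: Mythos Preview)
Your sketch is a fair high-level outline of the Chudnovsky--Robertson--Seymour--Thomas argument, but note that the paper does not prove Lemma~\ref{strong} at all: it is stated with a citation to \cite[1.2]{sptheorem} and used purely as a black box in the proof of Theorem~\ref{thm:main} (specifically, to conclude that a non-perfect $G$ contains an odd hole or odd antihole). So there is no ``paper's own proof'' to compare against---the authors simply import the result.

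That said, your outline is accurate in spirit. The easy direction is exactly as you describe, and for the hard direction the CRST strategy is indeed: (1) prove a structure theorem saying every Berge graph is basic or admits a proper $2$-join, an $\overline{M}$-join (homogeneous pair), or a balanced skew partition; (2) show basic graphs are perfect; (3) show a minimum imperfect Berge graph admits none of these decompositions. One small correction: you attribute the balanced skew partition exclusion to ``Chv\'atal's skew-partition conjecture, settled as part of the main theorem,'' but Chv\'atal's original conjecture concerned star-cutsets, and the balanced skew partition case is proved within \cite{sptheorem} itself rather than being a pre-existing named conjecture. Also, you omit the homogeneous-pair decomposition from your list. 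These are minor points in what is explicitly only a black-box citation anyway; for the purposes of this paper, writing ``See \cite{sptheorem}'' is all that is required.
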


On the other hand, there is a characterization for a vertex of the dual polytope of a lattice polytope $\Pc \subset \RR^d$ of
	dimension $d$ containing the origin in its interior. 

\begin{Lemma}[{\cite[Corollary 35.6]{HibiRedBook}}]
	\label{facet}
	Let $\Pc \subset \RR^d$ be a lattice polytope of
	dimension $d$ containing the origin in its interior. 
	Then a point $\ab \in \RR^d$ is a vertex of $\Pc^\vee$ if 
	and only if $\Hc \cap \Pc$ is a facet of $\Pc$,
	where $\Hc$ is the hyperplane
	$\left\{ \xb \in \RR^d : \langle \ab, \xb \rangle =1 \right\}$
	in $\RR^d$.
\end{Lemma}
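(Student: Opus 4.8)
The plan is to derive the statement from the standard polar duality between $\Pc$ and its dual $\Pc^\vee$. Since the origin lies in the interior of the full-dimensional polytope $\Pc$, the dual $\Pc^\vee$ is again a full-dimensional polytope with the origin in its interior, and biduality $(\Pc^\vee)^\vee = \Pc$ holds. The one structural fact I would extract and use throughout is the facet description of $\Pc^\vee$: because $\Pc$ is the convex hull of its vertices, $\Pc^\vee$ is the intersection of the halfspaces $\{\xb : \langle \vb, \xb \rangle \le 1\}$ over the vertices $\vb$ of $\Pc$, and, the origin being interior, each such inequality is irredundant, so the facets of $\Pc^\vee$ are exactly the sets $\{\xb \in \Pc^\vee : \langle \vb, \xb \rangle = 1\}$ with $\vb$ a vertex of $\Pc$. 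This vertex-of-$\Pc$/facet-of-$\Pc^\vee$ dictionary is the backbone of both implications, and I would invoke it only in this one (non-circular) direction.

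For the forward direction, suppose $\ab$ is a vertex of $\Pc^\vee$. Then $\ab \in \Pc^\vee$ means $\langle \ab, \yb \rangle \le 1$ for all $\yb \in \Pc$, so $\Hc$ supports $\Pc$ and $\Hc \cap \Pc$ is a proper face of $\Pc$. Being a vertex, $\ab$ is the unique point lying on $d$ facets of $\Pc^\vee$ whose inner normals are linearly independent; by the dictionary these facets correspond to vertices $\vb_1,\ldots,\vb_d$ of $\Pc$ with $\langle \ab, \vb_i \rangle = 1$, and linear independence of the normals is precisely linear independence of $\vb_1,\ldots,\vb_d$. Hence $\vb_1,\ldots,\vb_d \in \Hc \cap \Pc$ are $d$ linearly (in particular affinely) independent points, so $\dim(\Hc \cap \Pc) \ge d-1$; as $\Hc \cap \Pc$ is a proper face of the $d$-dimensional $\Pc$, it is exactly a facet.

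For the converse, suppose $F = \Hc \cap \Pc$ is a facet, so $\dim F = d-1$ and $F$ contains $d$ affinely independent vertices $\vb_1,\ldots,\vb_d$ of $\Pc$, all satisfying $\langle \ab, \vb_i \rangle = 1$. The elementary observation I would use is that affine independence on the hyperplane $\Hc$, which misses the origin, upgrades to linear independence in $\RR^d$: any relation $\sum c_i \vb_i = 0$ forces $\sum c_i = \langle \ab, \sum c_i \vb_i \rangle = 0$, making the relation an affine dependence, whence all $c_i$ vanish. Thus the system $\langle \ab, \vb_i \rangle = 1$ has $\ab$ as its unique solution, $\ab \in \Pc^\vee$ since $\Hc$ supports $\Pc$, and $\ab$ lies on the $d$ facets of $\Pc^\vee$ dual to $\vb_1,\ldots,\vb_d$, whose normals $\vb_i$ are linearly independent. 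A point lying on $d$ facets with linearly independent normals is a vertex, so $\ab$ is a vertex of $\Pc^\vee$.

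The only genuine subtlety, and the step I would guard most carefully, is setting up the facet description of $\Pc^\vee$ via biduality and irredundancy of the vertex constraints; once this dictionary is fixed, both implications collapse to the single linear-algebra fact that points on a hyperplane avoiding the origin are affinely independent if and only if they are linearly independent. Beyond that, I would only need to be sure to invoke the dictionary in the inclusion-reversing direction and to use \emph{proper} face to exclude the degenerate possibility $\Hc \cap \Pc = \Pc$.
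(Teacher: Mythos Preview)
The paper does not prove this lemma; it is quoted verbatim as \cite[Corollary~35.6]{HibiRedBook} and used as a black box in the proof of Theorem~\ref{thm:main}. So there is no ``paper's own proof'' to compare against.

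Your argument is a correct, self-contained polar-duality proof. The key ingredient you isolate---that the facets of $\Pc^\vee$ are exactly cut out by the vertices of $\Pc$---is the right one, and your justification (irredundancy via biduality) is sound, though terse: the point is that if the constraint from a vertex $\vb$ were redundant, then $\Pc^\vee$ would coincide with $Q^\vee$ for $Q=\conv\{\wb:\wb\text{ vertex of }\Pc,\ \wb\neq\vb\}$, and boundedness of $\Pc^\vee$ forces $0\in\mathrm{int}(Q)$, whence $Q=(Q^\vee)^\vee=\Pc$, contradicting $\vb\notin Q$. Once that dictionary is in place, both directions go through as you wrote. The linear-algebra observation that affine independence on a hyperplane avoiding the origin upgrades to linear independence is exactly what is needed, and you state and prove it cleanly.

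Two cosmetic points. In the forward direction, the sentence ``$\Hc$ supports $\Pc$ and $\Hc\cap\Pc$ is a proper face'' is asserted before you have exhibited any point of $\Pc$ on $\Hc$; strictly speaking, $\Hc$ supporting $\Pc$ requires $\Hc\cap\Pc\neq\emptyset$, which you only establish in the next sentence via the $\vb_i$. Also, ``$\ab$ is the unique point lying on $d$ facets'' should read ``there exist $d$ facets through $\ab$ with linearly independent normals, and $\ab$ is the unique point on their supporting hyperplanes''---$\ab$ may of course lie on more than $d$ facets if $\Pc^\vee$ is not simple. Neither affects correctness.
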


Now, we prove Theorem \ref{thm:main}.

\begin{proof}[Proof of Theorem \ref{thm:main}]
Since $\Oc_P * \Qc_{G} \subset \RR^{d+1}$ has no interior lattice points and 
$$
\sum_{i=1}^{d+1}
\eb_i
=
\frac{1}{d+1} \cdot {\bf 0}
+
\frac{1}{d+1} \cdot \eb_1
+
\frac{1}{d+1} \cdot \eb_2
+
\cdots
+
\frac{1}{d+1} \cdot \eb_{d+1}
+
\frac{d}{d+1} 
\sum_{i=1}^{d+1}
\eb_i
\in \ZZ^{d+1}$$
is in the interior of $2(\Oc_P * \Qc_G)$,  
the codegree of $\Oc_P * \Qc_G$ is 2.
On the other hand, $\Oc_P + \Qc_{G} \subset \RR^{d}$ has an interior lattice point
$$
\sum_{i=1}^{d}
\eb_i
=
\frac{1}{d+1} \cdot {\bf 0}
+
\frac{1}{d+1} \cdot \left(\eb_1 +\sum_{i=1}^{d} \eb_i\right)
+
\cdots
+
\frac{1}{d+1} \cdot \left(\eb_d +\sum_{i=1}^{d} \eb_i\right)
\in \ZZ^{d}$$
and hence the codegree of $\Oc_P + \Qc_G$ is 1.
The implication (iii) $\Rightarrow$ (i) was given in Theorem~\ref{thm:delta}. 
Moreover, by \cite[Theorem~2.6]{BN08} 
we have  (i) $\Leftrightarrow$ (ii).
If $G$ is perfect, then desired conditions (a), (b) and (c)
are proved
by  Proposition \ref{ATcayley},
Theorems~\ref{thm:groebner}
and
\ref{thm:initialOS}.
Thus it is enough to show (i) $\Rightarrow$ (iii).

	((i) $\Rightarrow$ (iii))
	We prove that if $G$ is not perfect,
	then $\Oc_P * \Qc_{G}$ is not  Gorenstein of index $2$.
It is enough to show that $\Pc:=2(\Oc_P * \Qc_G)-\sum_{i=1}^{d+1}
\eb_i
$ is not reflexive.
	
	First, we suppose $G$ has an odd hole $C$ of length $2\ell+1$, where $\ell \geq 2$.
	By renumbering the vertex set of $G$, we may assume that  the edge set of $C$ is $\{\{i,i+1\} : 1\leq i \leq 2\ell \} \cup \{1,2\ell+1\} $.
	Then the hyperplane $\Hc' \subset \RR^{d+1}$ defined by the equation $z_1+\cdots
+z_{2\ell+1}-(\ell+1) z_{d+1}= \ell$ is a supporting hyperplane of 
	$\Pc$.
	Let $\Fc$ be a facet of $\Pc$ with $\Hc' \cap \Pc \subset \Fc$
	and $a_1z_1+\cdots+a_{d+1}z_{d+1}=1$ with each $a_i \in \RR$ the equation of the supporting hyperplane $\Hc \subset \RR^{d+1}$ with $\Fc \subset \Hc$.
	The maximal stable sets of $C$ are
	$$S_1=\{1,3,\ldots,2\ell-1\},S_2=\{2,4,\ldots,2\ell \},\ldots,S_{2\ell+1}=\{2\ell+1,2,4,\ldots,2\ell-2\}$$
	and each $i \in [2\ell+1]$ appears $\ell$ times in the above list.
	Since for each $S_i$, we have 
$$\sum_{j \in S_i}a_j -\sum_{j \in [d+1] \setminus S_i} a_j =1,$$
it follows that $a_1=\cdots=a_{2\ell+1}$
and hence $-a_1-(a_{2\ell+2}+\cdots+a_{d+1})=1$.
	Moreover, since $a_1+\cdots+a_{d+1}=1$, one has
$(2\ell+1) a_1+a_{2\ell+2}+\cdots+a_{d+1}=1$.
	Thus we have $a_1=1/\ell \notin \ZZ$.
	This implies that $\Pc$ is not reflexive.
	
	Next, we suppose that $G$ has an odd antihole $C$ such that the length of $\overline{C}$ equals $2\ell+1$, where $\ell \geq 2$.
	Similarly, we may assume that the edge set of $\overline{C}$ is $\{\{i,i+1\} : 1\leq i \leq 2\ell \} \cup \{1,2\ell+1\}$.
	Then the hyperplane $\Hc' \subset \RR^{d+1}$ defined by the equation $z_1+\cdots+z_{2\ell+1} - (2\ell-1) z_{d+1}= 2$ is a supporting hyperplane of 
	$\Pc$.
	Let $\Fc$ be a facet of $\Pc$ with $\Hc' \cap \Pc \subset \Fc$
	and $a_1z_1+\cdots+a_{d+1}z_{d+1}=1$ with each $a_i \in \RR$ the equation of the supporting hyperplane $\Hc \subset \RR^{d+1}$ with $\Fc \subset \Hc$.
	Then since the maximal stable sets of $C$ are the edges of $\overline{C}$,
	for each edge $\{i,j\}$ of $C$, we have 
$$a_i+a_j-\sum_{k \in [d+1] \setminus \{i,j\}}a_k=1.$$
It then follows that $a_1=\cdots=a_{2\ell+1}$
and hence $-(2\ell-3)a_1-(a_{2\ell+2}+\cdots+a_{d+1})=1$.
	Moreover, since $a_1+\cdots+a_{d+1}=1$, one has
$(2\ell+1) a_1+ a_{\ell+2}+\cdots+ a_{d+1}=1$.
	Hence $a_1=1/2 \notin \ZZ$.
	This implies that $\Pc$ is not reflexive.
\end{proof}

\section{Remarks}
\label{sec:remark}

In this section,
we will give some remarks about $\Oc_P*\Oc_Q$ and $\Qc_G*\Qc_H$, where $P$ and $Q$ are finite partially ordered sets with $|P|=|Q|=d$ and $G$ and $H$ are finite simple graphs on $[d]$.
Since $\Oc_P$ and $\Oc_Q$ are lattice polytopes of 
``type ${\bf A}$'' (\cite[Section~3.1]{HPPS}), 
we have the following by \cite[Lemma~4.15]{HPPS}
together with Theorem~\ref{thm:groebner}:

\begin{Proposition}
Let $P$ and $Q$ be finite posets with $|P|=|Q|=d$.
Then $\Oc_P*\Oc_Q$ has a regular unimodular triangulation and hence so does $\Oc_P + \Oc_Q$.
In particular, 
both $\Oc_P*\Oc_Q$ and $\Oc_P + \Oc_Q$ are IDP.
\end{Proposition}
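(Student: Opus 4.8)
The plan is to assemble two external inputs and then quote Theorem~\ref{thm:groebner}. \emph{First input:} by \cite[Section~3.1]{HPPS} each order polytope $\Oc_P$ is a lattice polytope of type $\mathbf{A}$ --- its defining inequalities $0\le x_i$, $x_i\le 1$ and $x_i\le x_j$ for $p_i\le p_j$ form a network, i.e.\ totally unimodular, system --- and by \cite[Lemma~4.15]{HPPS} the Cayley sum of lattice polytopes of type $\mathbf{A}$ again admits a regular unimodular triangulation. Applying this to $\Pc_1=\Oc_P$ and $\Pc_2=\Oc_Q$, the Cayley sum $\Oc_P*\Oc_Q=\conv\bigl((\Oc_P\times\{1\})\cup(\Oc_Q\times\{0\})\bigr)$ has a regular unimodular triangulation, hence, by the correspondence between regular unimodular triangulations and squarefree initial ideals recalled at the end of Section~\ref{sec:pre}, the toric ideal $I_{\Oc_P*\Oc_Q}$ possesses a squarefree initial ideal, say of degree at most $\ell$ for some $\ell$.

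\emph{Second input:} I would then invoke Theorem~\ref{thm:groebner} with $m=2$. Its hypothesis requires $\sum_{\ab\in\Oc_P\cap\ZZ^d}\ZZ(\ab,1)=\ZZ^{d+1}$, and likewise for $\Oc_Q$; this is immediate from the known fact that $\Oc_P$ itself carries a regular unimodular triangulation, since a full-dimensional simplex of that triangulation exhibits a $\ZZ$-basis of $\ZZ^{d+1}$ among the points $(\ab,1)$ with $\ab\in\Oc_P\cap\ZZ^d$. Together with $\dim\Oc_P=\dim\Oc_Q=d$, Theorem~\ref{thm:groebner} then yields at once that $I_{\Oc_P+\Oc_Q}$ has a squarefree initial ideal and that both $\Oc_P*\Oc_Q$ and $\Oc_P+\Oc_Q$ have a regular unimodular triangulation and are IDP, which is exactly the assertion.

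There is essentially no obstacle beyond bookkeeping: the one point needing care is matching the hypotheses of \cite[Lemma~4.15]{HPPS} precisely --- that the Cayley-sum construction used here is the one covered there, and that the notion of ``type $\mathbf{A}$'' in that reference is met by order polytopes in the stated form --- after which Theorem~\ref{thm:groebner} does the rest.
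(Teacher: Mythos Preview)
Your proposal is correct and follows essentially the same route as the paper: the paper simply states (in the sentence preceding the proposition) that the result follows from \cite[Lemma~4.15]{HPPS}, applied because order polytopes are of type~$\mathbf{A}$ by \cite[Section~3.1]{HPPS}, together with Theorem~\ref{thm:groebner}. Your write-up merely makes explicit the passage from a regular unimodular triangulation of the Cayley sum to a squarefree initial ideal and the verification of the lattice hypothesis in Theorem~\ref{thm:groebner}, both of which are tacitly used in the paper's one-line justification.
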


By Lemma \ref{changelemma}, the Cayley sum $\Oc_P*\Oc_Q$ 
 is unimodularly equivalent to the convex hull of
$(\Oc_{P^*} \times \{1\}) \cup (- \Oc_{Q} \times \{0\})$.
The convex hull of 
$(2\Oc_{P} \times \{1\}) \cup (- 2\Oc_{Q} \times \{-1\})$, which is unimodularly equivalent to 
$2 \cdot \conv ((\Oc_{P} \times \{1\}) \cup (- \Oc_{Q} \times \{0\}))$,
 was studied in \cite{double}.
In particular, by \cite[Corollary 2.8 and Proposition 2.22]{double}, we have the following.

\begin{Proposition}
Let $P$ and $Q$ be finite posets with $|P|=|Q|=d$.
Then $\Oc_P*\Oc_Q$ is Gorenstein of index 2
if and only if $P^*$ and $Q$ have a common linear extension.
\end{Proposition}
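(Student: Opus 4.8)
The plan is to transport $\Oc_P*\Oc_Q$, by means of the unimodular equivalence recorded in Lemma~\ref{changelemma}, into the family of polytopes analyzed in \cite{double}, and then to quote the relevant characterization from there. By definition, $\Oc_P*\Oc_Q$ is Gorenstein of index $2$ if and only if $2(\Oc_P*\Oc_Q)$ is unimodularly equivalent to a reflexive polytope; since unimodular equivalence is preserved under dilation and integer translation, I would first identify $2(\Oc_P*\Oc_Q)$ up to unimodular equivalence.

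Applying Lemma~\ref{changelemma} with $\Qc=\Oc_Q$ and writing $P'=\{p_{d+1}\}\oplus P^*$, the relation $\Oc_{P'}\cap\ZZ^{d+1}=\{{\bf 0}\}\cup((\Oc_{P^*}\times\{1\})\cap\ZZ^{d+1})$ together with ${\bf 0}\in-\Oc_Q\times\{0\}$ shows that $\Oc_P*\Oc_Q$ is unimodularly equivalent to $\conv\{(\Oc_{P^*}\times\{1\})\cup(-\Oc_Q\times\{0\})\}$, as already observed in the discussion preceding the statement. Dilating by a factor of $2$ and translating the last coordinate by $-1$, one then sees that $2(\Oc_P*\Oc_Q)$ is unimodularly equivalent to $\conv\{(2\Oc_{P^*}\times\{1\})\cup(-2\Oc_Q\times\{-1\})\}$, which --- again as noted just before the statement --- is exactly the polytope investigated in \cite{double}, but with the first poset taken to be $P^*$ rather than $P$.

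Finally I would invoke the two cited results: \cite[Corollary~2.8]{double} determines precisely when $\conv\{(2\Oc_R\times\{1\})\cup(-2\Oc_Q\times\{-1\})\}$ is reflexive, and \cite[Proposition~2.22]{double} reformulates that condition as the statement that $R$ and $Q$ admit a common linear extension. Taking $R=P^*$ then yields that $\Oc_P*\Oc_Q$ is Gorenstein of index $2$ if and only if $P^*$ and $Q$ have a common linear extension. I expect the only point demanding attention to be the bookkeeping of the chain of unimodular equivalences --- in particular matching the level $\{0\}$ produced by Lemma~\ref{changelemma} with the symmetric level $\{-1\}$ used in \cite{double}, and checking that the reflexive normalization used there is the one reached by this chain --- which is routine once the correspondence between the two descriptions is made explicit; granting that, the claim follows directly from \cite[Corollary~2.8 and Proposition~2.22]{double}.
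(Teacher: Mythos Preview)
Your proposal is correct and follows essentially the same approach as the paper: the paper also reduces the question, via Lemma~\ref{changelemma}, to the polytope $\conv\{(\Oc_{P^*}\times\{1\})\cup(-\Oc_Q\times\{0\})\}$, observes that its second dilation is unimodularly equivalent to the double poset polytope of \cite{double} (with first poset $P^*$), and then appeals to \cite[Corollary~2.8 and Proposition~2.22]{double}. Your explicit mention of the translation in the last coordinate matching levels $\{0\}$ and $\{-1\}$ is exactly the bookkeeping the paper leaves implicit.
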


\begin{Example}
Let  $P=\{p_1, \ldots, p_d\}$ be a finite poset.
Then we have $\Oc_P * \Oc_P = \Oc_{P'}$,
where $P'$ is a disjoint union of $P$ and $\{p_{d+1}\}$.
It is known \cite{ASL} that,  $\Oc_{P'}$ is Gorenstein if and only if
 all maximal chains of $P'$ have the same length.
Hence $\Oc_P*\Oc_P$ is Gorenstein
if and only if $P$ is an antichain.
\end{Example}

On the other hand, there are examples
of  perfect graphs $G$ and $H$ on $[d]$
such that  
the Cayley sum $\Qc_G*\Qc_H$ is not IDP.
Recall that, for a finite poset $P$ on $[d]$, 
the comparability graph $G$ of $P$ is a perfect graph on $[d]$
and we have $\Cc_P= \Qc_G$.

\begin{Examples}
(a) Let $P$ and $Q$ be posets on $\{1,2,3,4,5\}$ defined by the Hasse diagram in Figure~\ref{poset1}.
\begin{figure}[h]
\includegraphics[width=7.5cm,pagebox=cropbox,clip]{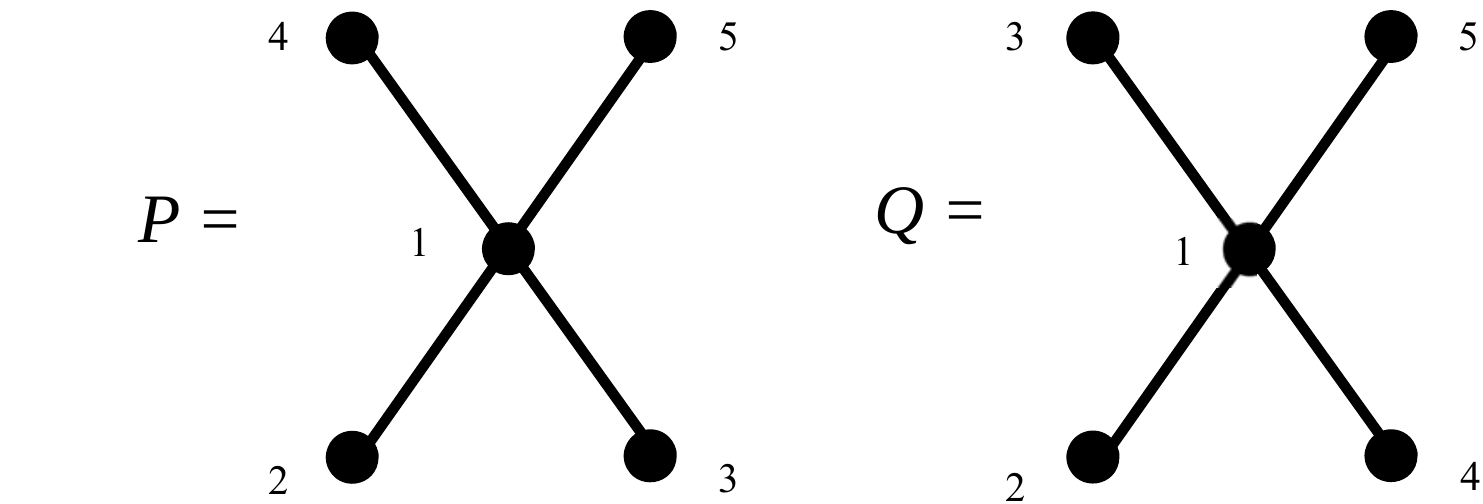}
\caption{$P$ and $Q$ such that $\Cc_P * \Cc_Q$ is not IDP.}
\label{poset1}
\end{figure}
Then $\Cc_P * \Cc_Q$ is not IDP.
On the other hand, $\Cc_P + \Cc_Q$ is IDP.

(b)
Let $P$ and $Q$ be posets on $\{1,2,3,4,5,6\}$ defined by the Hasse diagram in Figure~\ref{poset2}.
\begin{figure}[h]
\includegraphics[width=7.5cm,pagebox=cropbox,clip]{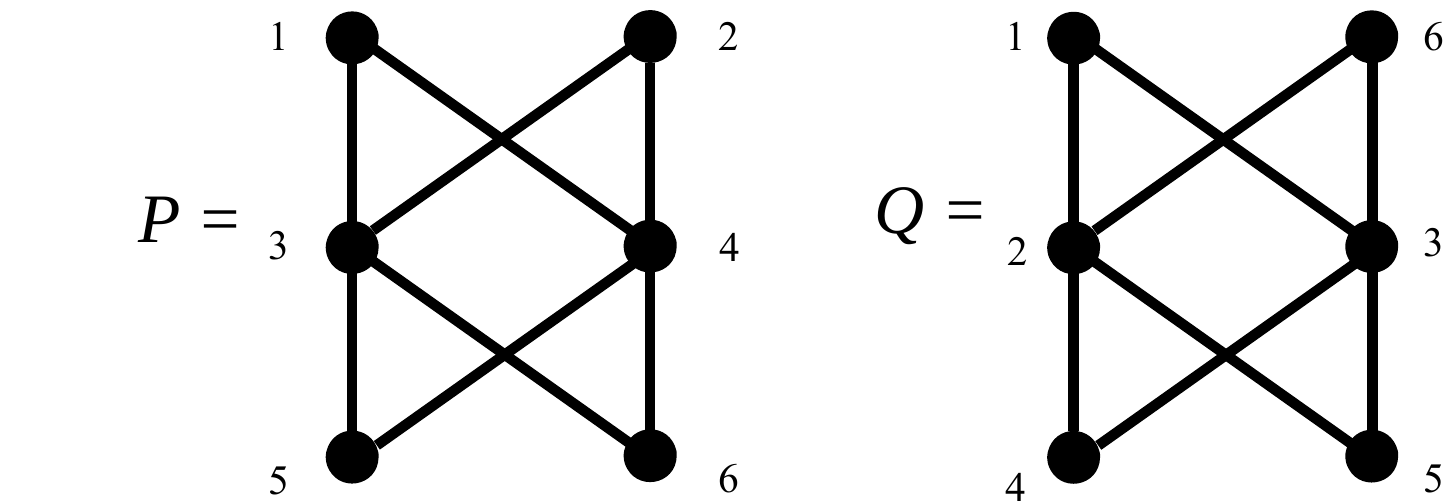}
\caption{$P$ and $Q$ such that $\Cc_P + \Cc_Q$ is not IDP.}
\label{poset2}
\end{figure}
Then neither $\Cc_P * \Cc_Q$ nor $\Cc_P + \Cc_Q$ is IDP.

(c)
Let $G$ be a perfect graph $G$ on $[d]$.
Then, we have $\Qc_G * \Qc_G = \Qc_{G'}$,
where $G'$ is a perfect graph obtained by adding an isolated vertex $d+1$ to $G$.
Thus $\Qc_G * \Qc_G$ has a regular unimodular triangulation
and hence IDP.
On the other hand, it is known \cite[Theorem 2.1 (b)]{OHspecial} that 
$ \Qc_{G'}$ is Gorenstein if and only if
all maximal cliques of $G'$ have the same cardinality.
Hence $\Qc_G * \Qc_G$ is Gorenstein if and only if $G$ is an empty graph.
\end{Examples}

\subsection*{Acknowledgment}
The authors are grateful to an anonymous referee for
his careful reading of the manuscript and for his comments.
The authors were partially supported by JSPS KAKENHI 26220701, 18H01134 and 16J01549.


\begin{thebibliography}{99}
	
	\bibitem{mirror}
	V.~Batyrev, 
	Dual polyhedra and mirror symmetry for Calabi-Yau hypersurfaces in toric varieties,  
	{\em J. Algebraic Geom.} {\bf 3} (1994), 493--535. 
	
	\bibitem{BJ}
	V.~V. Batyrev and D.~Juny,
	 Classification of {G}orenstein toric del {P}ezzo varieties in
	arbitrary dimension.
	{\em Mosc. Math. J.} {\bf10} (2010), 285--316.
	
	\bibitem{BN08}
	V.~V. Batyrev and B.~Nill,
	 Combinatorial aspects of mirror symmetry,
	{\em Contemp. Math.} {\bf 452} (2008), 35--66.
	
	\bibitem{BR15}
	M. Beck and S. Robins,
	``Computing the continuous discretely",
	Undergraduate Texts in Mathematics, Springer, second
	edition, 2015.
	
	\bibitem{double}
	T. Chappell, T. Friedl and R. Sanyal,
	Two double poset polytopes,
	{\em SIAM J. Discrete Math.} {\bf 31} (2017), 2378--2413.
	
		\bibitem{sptheorem}
	M.~Chudnovsky, N.~Robertson, P.~Seymour and R.~Thomas,
	The strong perfect graph theorem,
	{\em Ann. of Math.} {\bf 164} (2006), 51--229.
	
	
	\bibitem{Cox}
	D.~Cox, J.~Little and H.~Schenck, 
	``Toric varieties", 
	{Amer. Math. Soc.}, 2011. 

\bibitem{HPPS}
C.~Haase, A.~Paffenholz, L.~C.~Piechnik and F.~Santos,
Existence of unimodular triangulations --positive results.
arXiv:1405.1687

\bibitem{dojo}
T. Hibi (ed.),
``Gr\"{o}bner Bases: Statistics and Software Systems,''
Springer, New York, 2013.

\bibitem{ASL}
T. Hibi, Distributive lattices, affine semigroup rings and algebras with straightening laws,
{\em In}: M. Nagata and H. Matsumura (eds.) 
``Commutative Algebra and Combinatorics,''
Advanced Studies in Pure Math.,
vol. 11, pp. 93--109. North-Holland, Amsterdam, 1987.

\bibitem{HibiRedBook}
T. Hibi, ``Algebraic Combinatorics on Convex Polytopes,''
Carslaw Publications, Glebe NSW, Australia, 1992.

\bibitem{binomialideals}
J. Herzog, T. Hibi and H. Ohsugi, 
``Binomial ideals'',
Graduate Texts in Math. {\bf 279}, Springer, Cham, 2018.

\bibitem{hibili}
T. Hibi and N. Li, 
Chain polytopes and algebras with straightening laws. 
\textit{Acta Math. Vietnam.} {\bf 40}, 447--452 (2015).


	\bibitem{HTperfect}
	T. Hibi and A. Tsuchiya, 
	Reflexive polytopes arising from perfect graphs,
	\textit{J. Combin. Theory Ser. A} {\bf 157} (2018), 233--246.
	
	\bibitem{HTorderstable}
	T. Hibi and A. Tsuchiya, 
	Reflexive polytopes arising from partially ordered sets and perfect graphs,
	\textit{J. Algebraic Combin.} {\bf 49} (2019), 69--81.
	
	\bibitem{Joswig}
	M.~Joswig and K.~Kulas,
 Tropical and ordinary convexity combined.
	{\em Adv. Geom.} {\bf 10} (2010), 333--352.
	
	\bibitem{Kre}
	M. Kreuzer and H. Skarke,
	Complete classification of reflexive polyhedra in four dimensions,
	\textit{Adv. Theor. Math. Phys.} \textbf{4} (2000), 1209--1230.
	
		\bibitem{Lag}
	J. C. Lagarias and G. M. Ziegler,
	Bounds for lattice polytopes containing a fixed number of interior points in a sublattice,
	\textit{Canad. J. Math.} \textbf{43} (1991), 1022--1035.
	
	\bibitem{LM}
	M. Laso\'{n} and M. Micha\l ek,
	Non-normal very ample polytopes - constructions and examples,
	\textit{Exp. Math.} {\bf 26} (2017), 130--137.
	

\bibitem{stablesetnormal}
 K.~Matsuda, H. Ohsugi and K. Shibata,
Toric rings and ideals of stable set polytopes, \\
arXiv:1603.01850.

	\bibitem{Oda}
	T. Oda,
	Problems on Minkowski sums of convex lattice polytopes.
	arXiv:0812.1418.
	

\bibitem{OHcompressed}
	H.~Ohsugi and T.~Hibi,
Convex polytopes all of whose reverse lexicographic initial ideals are squarefree, 
\textit{Proc. Amer. Math. Soc.} {\bf 129} (2001),  2541--2546.


\bibitem{OHspecial}
	H.~Ohsugi and T.~Hibi,
Special simplices and Gorenstein toric rings,
\textit{J. Combin. Theory Ser. A} {\bf 113} (2006), 718--725. 

	\bibitem{harmony}
	H.~Ohsugi and T.~Hibi,
	Reverse lexicographic squarefree initial ideals and Gorenstein Fano polytopes,
	{\em J. Commut. Alg.} {\bf 10} (2018), 171--186. 
	
		\bibitem{integer}
	A. Schrijver,
	``Theory of Linear and Integer Programing",
	John Wiley \& Sons, 1986.

 \bibitem{Shibuta}
 T. Shibuta, Gr\"{o}bner bases of contraction ideals,
	\textit{J. Algebraic Combin.} {\bf 36} (2012), 1--19.
	
		\bibitem{Stanley}
	R.~P.~Stanley,
	Two poset polytopes, 
	{\em Disc. Comput. Geom.} {\bf 1} (1986), 9--23.
	
		 \bibitem{RS_OHGCM}
	R. P. Stanley,
	On the Hilbert function of a graded Cohen-Macaulay domain, 
	\textit{J. Pure and Appl. Algebra.} \textbf{73} (1991), 307--314.


\bibitem{Stu}
B. Sturmfels, ``Gr\"obner bases and convex polytopes,"
Amer. Math. Soc., Providence, RI, 1996.

	\bibitem{Tvolume}
	A. Tsuchiya,
	Volume, facets and dual polytopes of twinned chain polytopes,
	\textit{Ann. Comb.} {\bf 22} (2018), 875--884.
	
	\bibitem{TCayley}
	A. Tsuchiya,
	Cayley sums and Minkowski sums of $2$-convex-normal lattice polytopes.\\
	arXiv:1804.10538.
	

	
	


	
\end{thebibliography}
\end{document}